\numberwithin{equation}{section}
\def\ZZ{{\mathbb{Z}}}     \def\CC{{\mathbb{C}}}
\def\RR{{\mathbb{R}}}   \def\EE{{\mathbb{E}}} 
\def\cO{{\mathcal{O}}}
\def\scrT{{\mathscr{T}}}
\def\scrK{{\mathscr{K}}}
\def\cM{{\mathscr{M}}}
\def\scrM{{\mathscr{M}}}
\def\cC{{\mathcal{C}}}
\def\scrC{{\mathscr{C}}}
\def\scrU{{\mathscr{U}}}
\def\scrV{{\mathscr{V}}}
\def\vert{\mathbf{v}}
\def\zert{\mathbf{z}}
\def\face{\mathbf{f}}
\def\fC{{\mathfrak{C}}}
\renewcommand{\epsilon}{\varepsilon}
\newcommand{\id}{\mathrm{id}}
\newcommand{\CP}{\mathbb{CP}}
\newcommand{\cQ}{\mathcal{Q}}
\newcommand{\del}{\partial}
\renewcommand{\Im}{\mathrm{Im}}
\newcommand{\resp}{\emph{resp.}}
\newtheorem{theointro}{Theorem}
\newtheorem{corintro}[theointro]{Corollary}
\newtheorem{lemma}[subsubsection]{Lemma}
\newtheorem{prop}[subsubsection]{Proposition}
\newtheorem{cor}[subsubsection]{Corollary}
\newtheorem{theo}[subsubsection]{Theorem}
\newtheorem{dfn}[subsubsection]{Definition}
\theoremstyle{definition}
\theoremstyle{remark}
\newtheorem{rmk}[subsubsection]{Remark}
\newtheorem*{rmk*}{Remark}
\newtheorem*{rmks*}{Remarks}
\newtheorem{rmks}[subsubsection]{Remarks}
\subjclass[2010]{Primary 5299, 53D12; Secondary 39A14,
  39A70, 47B39, 53D50,  53D20, 53D30}
\keywords{Discrete geometry, piecewise linear maps, polyhedral surfaces, Lagrangian
  tori, isotropic tori, symplectic density}
\title{Polyhedral approximation by Lagrangian and isotropic tori}
\author{Yann Rollin}
\address{Yann Rollin, Laboratoire Jean Leray, Universit\'e de Nantes}
\email{yann.rollin@univ-nantes.fr}
\begin{document}
\begin{abstract}
	We prove that every smoothly immersed $2$-torus of $\RR^{4}$ can
	be approximated, in the $C^0$-sense, by immersed polyhedral
	Lagrangian 
	tori. In the case of a smoothly immersed (resp. embedded) Lagrangian
	torus of $\RR^4$, the surface can be approximated in the $C^1$-sense by immersed
	(resp. embedded) polyhedral Lagrangian tori. Similar statements are
	proved for isotropic $2$-tori of $\RR^{2n}$.
\end{abstract}

{\Huge \sc \bf\maketitle}

\section{Introduction}

\subsection{Statement of results}
The main result of this paper is the following approximation theorem,
in the context of
piecewise linear symplectic geometry.
\begin{theointro}
	\label{theo:main}
Let $\Sigma$ be a surface diffeomorphic to the $2$-dimensional real torus
	and
	$\ell:\Sigma\to\RR^{2n}$, a smooth isotropic immersion (resp.
	embedding), for some integer $n\geq 2$. Then,
	 there exist  piecewise linear isotropic  topological immersions (resp.
	embeddings) 
	$f:\Sigma\to\RR^{2n}$, arbitrarily close to $\ell$ in the
	$C^1$-sense.
\end{theointro}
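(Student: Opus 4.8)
\emph{Setup and first approximation.} Give $\Sigma$ a flat structure, $\Sigma=\RR^2/\Lambda$, and for small $\epsilon>0$ let $\mathcal{T}_\epsilon$ be the $\epsilon$-scaling of a fixed triangulation, so that every $2$-simplex has diameter of order $\epsilon$ and is uniformly non-degenerate. Write $\omega_0=\sum_{i=1}^n dx_i\wedge dx_{i+n}$ and $\lambda=\tfrac12\sum_i(x_i\,dx_{i+n}-x_{i+n}\,dx_i)$, so $d\lambda=\omega_0$; a piecewise linear map $f$ is isotropic precisely when the $\omega_0$-area $A_f(T)=\int_T f^*\omega_0$ of every image $2$-simplex vanishes. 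Let $f_\epsilon$ be the piecewise linear interpolant of $\ell$ on $\mathcal{T}_\epsilon$. Classical finite-element estimates on uniformly fat simplices give $\|f_\epsilon-\ell\|_{C^0}=O(\epsilon^2)$ and $\|df_\epsilon-d\ell\|_{C^0}=O(\epsilon)$, so $f_\epsilon\to\ell$ in the $C^1$ sense and, being $C^1$-close to the immersion (resp.\ embedding) $\ell$, it is already a piecewise linear topological immersion (resp.\ embedding) once $\epsilon$ is small. Since $d\ell$ is isotropic, $f_\epsilon^*\omega_0$ is pointwise $O(\epsilon)$, whence $|A_{f_\epsilon}(T)|=O(\epsilon^3)$; and, $\omega_0$ being exact, the piecewise Stokes formula forces $\sum_T A_{f_\epsilon}(T)=\int_\Sigma f_\epsilon^*\omega_0=0$.

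\emph{The linearised correction and its obstruction.} One would like to make $f_\epsilon$ isotropic by moving the images of its vertices by a field $u$, i.e.\ by studying the quadratic map $u\mapsto(A_{f_\epsilon^u}(T))_T$ into the hyperplane $\{a:\sum_T a_T=0\}$ of admissible defects; its differential at $u=0$ factors as $L_\epsilon=\delta\circ B_\epsilon$, where $\delta$ is the simplicial coboundary $C^1\to C^2$ and $B_\epsilon(u)([p,q])=\tfrac12\omega_0(u(p)+u(q),\ell(q)-\ell(p))$. The obstruction is structural: $B_\epsilon(u)([p,q])$ sees $u(p)$ only through its $\omega_0$-pairing with the (essentially $2$-dimensional) span of the edges issuing from $p$, and $B_\epsilon$ carries constant displacements into $\ker\delta$; consequently $L_\epsilon$ is \emph{not} onto the hyperplane of admissible defects --- its cokernel has a fixed positive dimension, of order $2n$, independent of how fine $\mathcal{T}_\epsilon$ is. So a purely infinitesimal vertex motion cannot remove a generic defect, and one must exploit the flexibility of the isotropic relation. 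The plan is a two-stage correction: first a \emph{flexible} modification, of $C^1$-size $o(1)$, that brings the defect into $\mathrm{im}\,L_\epsilon$; then a quantitative implicit-function argument, built on a uniform (scale-invariant) discrete elliptic estimate for $L_\epsilon$ over the family $\mathcal{T}_\epsilon$, that kills the residual defect exactly by a small vertex perturbation.

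\emph{The flexible modification.} I would carry out the first stage "$1$-skeleton first". Replace each edge $[\ell(p),\ell(q)]$ by a short piecewise linear path $\gamma_{pq}$, $C^1$-close to $\ell|_{[p,q]}$ and carrying exactly the integral $\int_{\ell|_{[p,q]}}\lambda$; this costs only a tiny ($O(\epsilon^3)$) wedge-shaped excursion transverse to the edge. Because $\ell$ is isotropic, each triangle loop then satisfies $\oint_{\gamma_T}\lambda=\sum_{\mathrm{cyc}}\int_{\gamma_{p_ip_j}}\lambda=\int_{\ell(\partial T)}\lambda=\int_T\ell^*\omega_0=0$. One next fills each $\gamma_T$ --- now a \emph{decoupled, local} problem, one triangle at a time --- by a piecewise linear isotropic disc, triangulated at a scale $\mu$ with $\epsilon^2\ll\mu\ll\epsilon$ and $C^1$-close to the affine map $\ell|_T$, and glues along the (fixed) edge paths. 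This local filling carries the genuine $h$-principle content: starting from a crude filling one corrugates it so that the edge directions at interior vertices span a symplectic subspace of $\RR^{2n}$ --- which makes the relevant relative linearised operator surjective, and which is possible precisely because $n\ge2$ --- and then closes up by a finite-dimensional implicit function theorem whose constants are uniform after rescaling to unit size (a compactness argument over the compact set of affine data with symplectic defect $O(\epsilon)$).

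\emph{Conclusion and the main difficulty.} Assembling the pieces yields a piecewise linear map that is isotropic on every simplex and $C^1$-close to $\ell$, hence --- being $C^1$-close to the immersion (resp.\ embedding) $\ell$ of a compact surface --- itself a piecewise linear topological immersion (resp.\ embedding), which proves Theorem~\ref{theo:main}. The hard part is the quantitative relative $h$-principle for the local isotropic filling of the third step, together with the uniform discrete elliptic estimate used in the second; the hypothesis $n\ge2$ is essential there, as it provides the room for the corrugation, and is in any case necessary since $\RR^2$ carries no isotropic immersed surfaces. The immersion/embedding refinements themselves are routine, being stable under $C^1$-small perturbations.
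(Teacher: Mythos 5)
Your strategy is genuinely different from the paper's, and the comparison is worth making explicit. The paper (\S\ref{sec:jaub}) works with \emph{quadrangular} meshes $\cQ_N(\Sigma)$, samples $\ell$ to get $\tau_N$, interprets the vanishing of the discrete symplectic density $\mu_N(\tau)=\omega(\scrU_\tau,\scrV_\tau)$ as a moment-map equation, and produces an isotropic mesh $\rho_N$ by a global fixed-point iteration driven by the Green operator of a discrete Laplacian in the weak norms $\cC^{k,\alpha}_w$. The single new technical ingredient is Proposition~\ref{prop:betterest}, the sharpened Taylor estimate $\|\mu_N(\tau_N)\|_{\cC^1_w}=\cO(N^{-2})$ (versus the earlier $\cO(N^{-1})$), which is exactly what upgrades the final closeness from $C^0$ to $C^1$. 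The quadrangular mesh is then promoted to a triangular mesh by placing the ``optimal apex'' of Definition~\ref{dfn:optimal} over each isotropic quadrilateral, and the resulting piecewise linear map $\ell_N$ is shown to be an immersion/embedding by the piecewise-smooth inverse-function argument of Proposition~\ref{prop:approx} --- this last soft step you have essentially right. Your route instead bypasses the global elliptic analysis: triangulate at scale $\epsilon$, corrugate each edge so it carries the exact Liouville integral of $\ell$ along it, and then fill each triangle boundary (now with vanishing $\oint\lambda$) by a local piecewise linear isotropic disc, using an $h$-principle-style corrugation in the interior. If this could be made rigorous and quantitative it would arguably be a more elementary proof, so the difference in routes is substantive.

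However, as written there is a genuine gap, and it sits exactly where you flag ``the hard part.'' The local isotropic filling is the whole content of the argument: you need to fill a piecewise linear loop in $\RR^{2n}$ that is $C^1$-close to a nearly isotropic affine triangle, with $C^1$-control on the filling that is \emph{uniform over the $\sim\epsilon^{-2}$ triangles} as $\epsilon\to 0$. Your description --- ``corrugate so that the edge directions at interior vertices span a symplectic subspace\,\dots\,then close up by a finite-dimensional implicit function theorem whose constants are uniform after rescaling'' --- names a mechanism but does not establish it. In particular the conditioning of the relevant linear system as the boundary data degenerates to the flat isotropic limit (which is exactly the regime of interest), the size of the quadratic remainder, and the $C^1$-cost of the interior corrugation at the filling scale $\mu$ are all unquantified, and the asserted compactness of the affine data does not by itself give uniformity of the implicit-function radius. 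There is also a structural unclarity between your second and third steps: the second paragraph diagnoses a fixed $\sim 2n$-dimensional cokernel of the global linearisation $L_\epsilon=\delta\circ B_\epsilon$ and proposes a two-stage correction ending in a global discrete elliptic estimate, whereas the third paragraph, taken at face value, never uses $L_\epsilon$ at all (each triangle is filled independently and glued along the fixed corrugated edges). The cokernel claim is not justified, and if the third-paragraph plan is the real one, the second paragraph is a red herring. The paper's own discussion (Remark in \S\ref{sec:norms}) of the shear action and of the failure of the naive discrete Laplacian to converge to the smooth Laplacian is a warning that discrete linearised analysis in this problem is more delicate than it looks, so ``uniform scale-invariant discrete elliptic estimate'' over the family $\mathcal{T}_\epsilon$ is not something to assert casually. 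In sum: a plausible alternative outline, but not a proof; the local quantitative filling lemma would need to be stated and proved for this to stand.
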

	A weak version of Theorem~\ref{theo:main} appeared first in~\cite{JRT}. 
	This improved result now deals with the stronger $C^1$-estimate and the case
	of critical dimension $n=2$, which  is perhaps the most
	interesting. Indeed this new result can be applied to manifolds,
	rather than maps, and leads to construction of large
	families  of  immersed and embedded
	polyhedral Lagrangian tori of $\RR^4$, obtained by approximation of the smooth ones,
	according to Corollaries~\ref{cor:main} and \ref{cor:hprin}.
\begin{corintro}
	\label{cor:main}
	For every integer $n\geq 2$,
	any smoothly immersed (\resp. embedded) isotropic $2$-dimensional torus of
	$\RR^{2n}$ can be approximated,  in the $C^1$-sense, by arbitrarily close
	immersed (\resp. embedded) polyhedral isotropic tori.
\end{corintro}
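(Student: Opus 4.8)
The plan is to deduce Corollary~\ref{cor:main} directly from Theorem~\ref{theo:main}: the corollary is simply the ``submanifold'' reformulation of the ``map'' statement of the theorem. Fix an integer $n\ge 2$ and let $T\subset\RR^{2n}$ be a smoothly immersed (resp. embedded) isotropic $2$-torus. By definition, $T=\ell(\Sigma)$ where $\Sigma$ is a surface diffeomorphic to the $2$-torus and $\ell\colon\Sigma\to\RR^{2n}$ is a smooth isotropic immersion (resp. embedding, so that $\ell$ is a diffeomorphism onto $T$). Fixing this reference parametrization $\ell$ once and for all, I would apply Theorem~\ref{theo:main}: for every $\epsilon>0$ it produces a piecewise linear isotropic topological immersion (resp. embedding) $f\colon\Sigma\to\RR^{2n}$ with $\|f-\ell\|_{C^1}<\epsilon$.

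It then remains to see that $T_f:=f(\Sigma)$ is an immersed (resp. embedded) polyhedral isotropic torus that is $C^1$-$\epsilon$-close to $T$. The polyhedral isotropic structure is already contained in the conclusion of Theorem~\ref{theo:main}: $f$ is affine on each $2$-face of a triangulation of $\Sigma$ and sends that face into an affine isotropic $2$-plane of $\RR^{2n}$, so $T_f$ is a finite union of isotropic polygons; local injectivity of the topological immersion $f$ makes $T_f$ an immersed polyhedral surface carrying the torus topology of $\Sigma$, and in the embedded case $f$ is a PL homeomorphism onto the embedded polyhedral torus $T_f$. As for the approximation, ``$C^1$-approximation'' of $T$ by a polyhedral torus means precisely that a chosen parametrization of the latter is $C^1$-close to the fixed parametrization $\ell$ of the former, which holds by construction; equivalently, in the embedded case $f\circ\ell^{-1}\colon T\to\RR^{2n}$ is a PL map $C^1$-close to the inclusion, so $T_f$ sits in an arbitrarily thin tubular neighbourhood of $T$, is a normal graph over $T$ of arbitrarily small slope, and has tangent planes (defined off its $1$-skeleton) arbitrarily close to those of $T$. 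Letting $\epsilon\to0$ yields the asserted approximating family.

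I do not expect a genuine obstacle: all the substance lies in Theorem~\ref{theo:main}, and the isotropy of the faces, the immersion/embedding property, and the torus topology are inherited verbatim from its conclusion. The only point requiring care is purely bookkeeping, namely to pin down the meaning of ``$C^1$-close'' for a polyhedral torus relative to a smooth one (closeness of chosen parametrizations, or equivalently Hausdorff-closeness together with closeness of tangent planes away from the $1$-skeleton) and to verify that this is implied by $C^1$-closeness of the maps $f$ and $\ell$.
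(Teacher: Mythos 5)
Your proposal is correct and follows essentially the same route as the paper: apply Theorem~\ref{theo:main} to a parametrization $\ell$ of the given isotropic torus, and observe that the resulting piecewise linear isotropic topological immersion (resp.\ embedding) $f$ has image a polyhedral isotropic torus that is $\epsilon$-close to the original in the $C^1$-sense of Definition~\ref{def:close}. The extra discussion of what $C^1$-closeness means geometrically is harmless but not needed, since the paper's Definition~\ref{def:close} already phrases closeness in terms of the parametrizing maps.
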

Combined with the h-principle, we obtain the following approximation
theorem for smooth tori: 
\begin{corintro}
	\label{cor:hprin}
	For $n\geq 2$, every smoothly immersed 
	$2$-dimensional 
	torus of
	$\RR^{2n}$  can be approximated, in $C^0$-sense, by
	arbitrarily close immersed polyhedral isotropic tori.

	If $n\geq 3$, we can replace "immersed" with "embedded" in the above
	statement.
\end{corintro}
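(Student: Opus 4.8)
The plan is to deduce Corollary~\ref{cor:hprin} from Corollary~\ref{cor:main} (equivalently, from Theorem~\ref{theo:main}) together with Gromov's h-principle for isotropic immersions and embeddings. Given a smooth immersion $\iota\colon\Sigma\to\RR^{2n}$ with $\Sigma$ a $2$-torus, the argument has two steps. First, produce a \emph{smooth} isotropic immersion (resp. embedding) $\ell\colon\Sigma\to\RR^{2n}$ that is $C^0$-close to $\iota$. Second, apply Corollary~\ref{cor:main} to $\ell$, obtaining a polyhedral isotropic immersion (resp. embedding) $f$ that is $C^1$-close, hence in particular $C^0$-close, to $\ell$. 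Chaining the two $C^0$-estimates produces a polyhedral isotropic torus $C^0$-close to $\iota$, as desired.

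For the first step in the immersed case I would invoke the h-principle for isotropic immersions (Gromov; Gromov--Lees in the Lagrangian case $n=2$; see also Eliashberg--Mishachev): isotropic immersions of a manifold into a symplectic manifold satisfy the $C^0$-dense form of the h-principle, so it suffices to exhibit a \emph{formal} isotropic immersion whose underlying map is $C^0$-close to $\iota$ — one may in fact take the underlying map to be $\iota$ itself. A formal isotropic immersion covering $\iota$ is a fibrewise injective bundle map $F\colon T\Sigma\to\iota^*T\RR^{2n}$ with $F^*\omega=0$, where $\omega$ is the standard symplectic form. Since $\Sigma$ is the $2$-torus, $T\Sigma$ is trivial, and $\iota^*T\RR^{2n}$ is trivial as well; hence such an $F$ amounts to a map from $\Sigma$ to the space of isotropic $2$-frames of $\RR^{2n}$. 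That space is non-empty as soon as $n\geq 2$, so a \emph{constant} such map already provides a formal solution, with no topological obstruction whatsoever. The h-principle then yields the smooth isotropic immersion $\ell$ arbitrarily $C^0$-close to $\iota$.

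For the embedded statement the argument is the same, except that one now uses the h-principle for isotropic \emph{embeddings}, which is available only in the subcritical range $2\cdot 2<2n$, i.e. precisely for $n\geq 3$; this is exactly where the dimension hypothesis enters (in the critical Lagrangian case $n=2$ the embedded h-principle fails, e.g.\ by Gromov's theorem on Lagrangian embeddings, so only the immersed conclusion survives). When $n\geq 3$ we have $\dim\Sigma<\tfrac{1}{2}\dim\RR^{2n}$, so a generic $C^0$-small perturbation of $\iota$ is already an embedding; thus formal isotropic embeddings $C^0$-close to $\iota$ exist (the bundle map being chosen exactly as above), and the subcritical isotropic embedding h-principle upgrades one of them to a genuine smooth isotropic embedding $\ell$ that is $C^0$-close to $\iota$. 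Feeding this embedded $\ell$ into Corollary~\ref{cor:main} completes the proof.

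The deduction is short, and I do not expect any genuinely hard step: all the real content lies in Theorem~\ref{theo:main}. The only points needing care are citing the correct version of the h-principle (the $C^0$-dense rather than mere existence statement, in the isotropic rather than the Lagrangian-immersion setting, and for embeddings only in the subcritical range), and verifying that a formal solution $C^0$-close to $\iota$ exists — which is immediate because the $2$-torus is parallelizable, so the obstruction-theoretic analysis collapses to choosing a single isotropic $2$-frame.
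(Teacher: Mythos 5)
Your proposal is correct and follows essentially the same route as the paper: approximate the given smooth immersion by a smooth isotropic immersion (or, for $n\geq 3$, an isotropic embedding) via Gromov's h-principle, then feed that into Corollary~\ref{cor:main} and chain the $C^0$-estimates. The paper simply cites ``a result of Gromov'' without spelling out the formal-solution argument you give, but the deduction is the same.
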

%\begin{rmks}
%	\begin{itemize}
%		\item Corollaries~\ref{cor:main} and \ref{cor:hprin}  apply in
%	 particular for $n=2$. Therefore,
%	 any smoothly immersed  $2$-dimensional torus of
%	 $\RR^4$ can be approximated 
%	by  
%	arbitrarily close immersed polyhedral Lagrangian tori, in the $C^0$-sense,
 %by Corollary~\ref{cor:hprin}. 
%
%\item	On the other hand, a smoothly
%	immersed Lagrangian torus $S_0$ of $\RR^{2n}$ 
%	admits arbitrarily
%	close approximation by an immersed polyhedral
%	Lagrangian torus $S_1$, in the $C^1$-sense, according to
%	Corollary~\ref{cor:main}.
%\item Conversely, the immersed polyhedral  torus $S_1$ 
%	can be approximated  by an arbitrarily close
%	smoothly immersed torus $S_2$, in the $C^0$-sense. Notice that $S_2$
%	is not necessarily Lagrangian now. 
%	But by Gromov's
%	result, $S_2$ can be approximated by an arbitrarily close
%	Lagrangian immersed torus $S_3$, in the $C^0$-sense. Overall, this
%	shows that the polyhedral Lagrangian approximation $S_1$ of $S_0$
%	can, in turn, be approximated in the $C^0$-sense by a smoothly
%	immersed Lagrangian torus $S_3$. 
%	 Although this argument seems to point toward a possible smoothing
%	 theory for polyhedral Lagrangians, notice that we have lost
%	 control, at this stage, on  the $C^1$-distance between 
%the smoothly immersed Lagrangian tori $S_0$ and $S_3$.
%	\end{itemize}
%\end{rmks}

\subsection{Some motivations on piecewise linear symplectic geometry}
 \emph{Flexibility} issues have attracted
 a lot of attention in 
symplectic geometry. Similarly to the \emph{sphere eversion theorem}, many flexibility
questions arise in symplectic geometry and do not have  obvious answers.
Gromov
invented powerful techniques, promoted under the name of
\emph{h-principle}, to study certain classes of underdetermined partial differential
relations~\cite{G86}.  
For example, by a  theorem of Gromov, any smoothly immersed surface of~$\RR^4$
can be approximated by Lagrangian immersed surfaces, in
the  $C^0$-sense. 
Like many flexibility results in the framework of symplectic geometry, the
proof of this fact relies  on  stability properties of symplectic
structures and ordinary differential equation techniques. 
Those properties, specific to smooth
manifolds,   are not directly available for
\emph{piecewise linear} 
or \emph{piecewise smooth manifolds}. 
Since these essential tools are missing, very little is known about
natural versions of piecewise linear  
symplectic geometry. We mention now several elementary problems of symplectic
geometry, whose piecewise linear counterparts are poorly understood:
\begin{enumerate}
			\item In spite of the fact that triangulations are
quite common in geometry and topology, it is not known whether a smooth symplectic
		manifold admits a \emph{symplectic triangulation} (cf.
		the original work of Distexhe in~\cite{Dis19} for more details). Even the case of $\CP^n$ for
$n\geq 2$ is not known. 
		 Conversely, it is not known if there are suitable
			smoothing techniques for
		piecewise linear symplectic manifolds. 
		\item By Darboux theorem, every smooth symplectic structure
			is locally standard, which shows that smooth
			symplectic manifold do not have any local
			invariants. However,  there is no 
			analogue of the Darboux
			theorem for piecewise linear symplectic manifolds.
In fact, there might even be local
obstructions for the local triviality of piecewise linear symplectic structures!
\item There are many constructions of  Lagrangian
		manifolds for smooth symplectic manifolds and their
		deformation theory is very simple. This is natural
		since Lagrangian manifolds are 
central object for the symplectic topologists
		preoccupations.  
		Yet, 
		examples of polyhedral Lagrangian manifolds are scarce.
		As the main contribution of this paper, we constructs large families of polyhedral
		Lagrangian tori of $\RR^4$ by approximation   of the smooth ones in
		Corollaries~\ref{cor:main} and \ref{cor:hprin}. 
		\item Suitable piecewise linear 
			analogues of the group of symplectic and
			Hamiltonian diffeomorphisms are not well
			understood. A first approach was made by Gratza, 
			who introduced some clever (and quite difficult)
			approximation techniques in~\cite{Gra98}. 
\end{enumerate}
 Because piecewise linear or polyhedral geometry is such natural
framework, it has attracted a lot of attention in the context of topology,
in particular as a tool 
for studying the relation between smooth and topological
manifolds.
It seems natural to consider problems of a symplectic nature in the
piecewise linear context.
For instance, numerical experiments for  
various geometric flows involving symplectic
manifolds rely on the use of
polyhedral geometry for discretization purposes. Thus,
natural questions about symplectic geometry 
arise from a numerical perspective as well.
The goal of this work, initiated in~\cite{JRT}, is an attempt to 
approximate smooth isotropic or Lagrangian
manifolds using piecewise linear geometry.
Although this is a first step, it is not clear at this stage  what is the
relationship between the smooth symplectic and piecewise linear symplectic 
worlds. Before we get a full picture tremendous efforts have yet to be made. 

The proof of Theorem~\ref{theo:main} relies
on some techniques introduced in~\cite{JRT}. 
The main idea is that the problem of finding isotropic surfaces is 
 equivalent to  the problem of finding zeroes of a certain
\emph{Donaldson moment map}, by the \emph{fixed point principle}. This may seem unusual, but this
point of view also provides effective techniques for producing solutions
via numerical methods and flows~\cite{JRT,JR}. However, a more
straightforward 
approach, which is expected to work as well,  is currently being taken care of
by Etourneau in~\cite{Et}.

\section{Background material}
In  this section, we provide some background material and
introduce some vocabulary, that have  been used in
Theorem~\ref{theo:main} and Corollaries~\ref{cor:main} and~\ref{cor:hprin},
without a proper introduction. 
Then, an overview of the techniques introduced in~\cite{JRT} is given,
since these tools play a central role in  the
proof of Theorem~\ref{theo:main}.

\subsection{Basic definitions of symplectic geometry}
The even dimensional Euclidean space $\RR^{2n}$,
identified to $ \CC^n$, comes equipped with canonical coordinates $z_j=x_j+iy_j$, for
$1\leq j\leq n$. This Euclidean
space is endowed with the \emph{canonical  Euclidean inner product} 
$g = \sum dx_i^2 +dy_j^2$, the corresponding
 \emph{Euclidean norm} $\|w \| = \sqrt{g(w,w)}$, the \emph{symplectic} and
 \emph{Liouville forms}
$$
\omega = \sum_{j=1}^n dx_j\wedge dy_j, \quad \lambda =\sum_{j=1}^n x_jdy_j
$$
with the property that $\omega=d\lambda$.

Let $\Sigma$ be a smooth manifold of real dimension $2$, also called a
 \emph{smooth surface}.
A smooth map $\ell:\Sigma\to\RR^{2n}$ is called \emph{isotropic} if $\omega$
vanishes along $\ell$, which is to say
$$
\ell^*\omega=0.
$$
If $\ell$ is an immersion (resp. an embedding), its image $S=\ell(\Sigma)$ is
called an \emph{immersed (resp. embedded) isotropic surface} of $\RR^{2n}$. In such
situation, we have necessarily $n\geq 2$. If $n=2$, which is the critical
dimension, then  $S$ is
called an immersed (resp. embedded) \emph{Lagrangian} surface of $\RR^4$.

\subsection{Piecewise smooth and piecewise linear maps}
\subsubsection{Some basic definitions}
We recall some standard definitions of piecewise linear topology.

A \emph{triangulation} 
of a smooth closed surface $\Sigma$ is a triple $\scrT= (\Sigma,\scrK,
\phi)$, where $\scrK$ is a finite simplicial complex contained in some
Euclidean space~$\RR^d$, together with a homeomorphism
$\phi:|\scrK|\to \Sigma$, where $|\scrK|$ denotes the topological subspace
of $\RR^d$ associated to
the simplicial complex~$\scrK$.
If the restriction  of  $\phi$ to every simplex $s$ of the triangulation $\scrK$ is a
\emph{smooth embedding}, we say that the triangulation $\scrT$ is a \emph{smooth
triangulation}.
Every surface triangulation in this paper is assumed to be smooth, even if
this is not stated.

Given  a triangulation $\scrT$ of $\Sigma$, any simplex $s$ of
$\scrK$ is diffeomorphic to  its image $\phi(s)\subset \Sigma$, by
definition. This
identification being understood, $s$ is also referred to as a simplex of
$\Sigma$, and we  drop all references to the map  $\phi$ for
simplicity of notations. 
As the simplicial complex $|\scrK|$ is homeomorphic to a surface, its
simplices
can only be  $0$, $1$ and $2$-dimensional. They are called respectively
\emph{vertices, edges} and \emph{facets} of the triangulation.

A real (or vector) valued \emph{piecewise smooth map} $h$ on $\Sigma$ is a continuous  map,
such that there
exists a smooth triangulation $\scrT$ of $\Sigma$, with the property that the restriction of 
$h$ to every
facet of the triangulation is smooth. Such a triangulation is said
to be \emph{adapted} to the piecewise smooth map~$h$.

Given a smooth triangulation $\scrT$ of $\Sigma$, we can define the class of \emph{piecewise
linear maps} on $\Sigma$. A real (or vector) valued map $h$ on $\Sigma$ is
piecewise linear with respect to $\scrT$, if it is continuous and has piecewise linear restriction on
each simplex of the triangulation. For a second triangulated surface
$\scrT_2=(\Sigma_2,\scrK_2,\phi_2)$, we say that a map
$h:\Sigma\to\Sigma_2$ is \emph{piecewise linear with respect to the
triangulations}
$\scrT$ and $\scrT_2$, if the composition $ \phi_2^{-1}\circ h:
\Sigma\to |\scrK_2|\subset \RR^{d_2}$ is piecewise linear in the previous
sense.
 A
\emph{piecewise linear isomorphism} between two triangulated surfaces is a
bijective piecewise linear map  between the triangulated surfaces. It turns
out that the inverse map of a piecewise linear isomorphism is also piecewise linear.

If two triangulations of $\Sigma$ are piecewise linear isomorphic, they
 define the same spaces of piecewise linear functions on $\Sigma$.
A \emph{piecewise linear structure} on a surface $\Sigma$ is an equivalence class
of triangulations, modulo piecewise linear isomorphisms. By definition, a
piecewise linear structure on $\Sigma$ has a well defined class of piecewise linear
functions.

\begin{rmks}
	By Cairns and Whitehead results~\cite{Cai35, Whi40}, smooth manifolds admit a unique
	piecewise linear structure, up to piecewise linear isomorphisms. 
	In other words, the \emph{smooth}
	triangulations used to
	define a piecewise linear structures on a smooth manifold exist.
	Furthermore, they are  all piecewise linear
	isomorphic and  locally standard. 

	According to these results,  every linear map  on $\Sigma$ is
	 understood with respect to  the essentially unique piecewise
	linear structure deduced from the smooth structure (like in
Theorem~\ref{theo:main} for instance).

Given a piecewise
linear map~$h$ on~$\Sigma$ with respect to some triangulation, we can always replace the triangulation by  a refinement, so that
the map~$h$ is affine on each simplex of the triangulation. This refinement
	operation does not change the associated piecewise linear
	structure. Such a
refined triangulation is said to be \emph{adapted} to $h$. 

	With the above terminology, a piecewise
	linear map is piecewise smooth with respect to some adapted
	triangulation.
\end{rmks}

Starting at \S\ref{sec:jaub}, the surface $\Sigma$ is  a quotient
torus $\EE/\Gamma$, where  $\Gamma$ is a
lattice of the Euclidean plane $\EE$. In this setting, it is convenient use another point of view for
triangulations. 
Suppose we are given  a $\Gamma$-invariant locally finite \emph{simplicial
decomposition} of $\EE$. Then there exists a  triangulation
$\scrT=(\EE/\Gamma,\scrK,\phi)$, unique up to simplicial isomorphism of
simplicial complex and the $\Gamma$-action,  with the following
properties~:
\begin{enumerate}
	\item For each simplex $s$ of $\scrK$, let $\tilde\phi:s\to\EE$ be
		a lift of
		map $\phi$ to the universal
		cover~$\EE$ of~$\Sigma$. This map is uniquely defined up to
		 the $\Gamma$-action. We require that~$\tilde \phi$
		is an affine map for every simplex $s$ of~$\scrK$.
	\item The  simplicial decomposition of~$\EE$ is recovered as the
		collection of simplices $\tilde \phi(s)$, where $s$ is one
		of the simplices of $\scrK$ and $\tilde
		\phi$ is one of the lifts of $\phi$.
\end{enumerate}
A locally finite $\Gamma$-invariant decomposition of $\EE$ provides a
collection of simplices covering $\EE$. This set of simpleces of $\EE$ is finite
modulo $\Gamma$ and can be seen as an \emph{abstract simplicial
complex}.
Then $\scrK$ is defined as a geometric realization of this abstract
simplicial complex, in some Euclidean space $\RR^d$. The existence and uniqueness property of a homeomorphism
$\phi:|\scrK|\to \Sigma$ with the above conditions is then essentially
trivial. 

In conclusion, triangulations of quotient tori can be defined thanks to
$\Gamma$-invariant locally finite  simplicial
decomposition of the universal cover $\EE$.

\subsubsection{Isotropic piecewise smooth maps}
Let $h:\Sigma\to\RR^{2n}$ 
be a piecewise smooth map and $\scrT$ a triangulation of $\Sigma$ adapted
to~$h$. The pull back $h^*\omega$ does not make sense \emph{a priori}, since the
tangent map to $h$ is not globally defined on $\Sigma$.
Yet, the restriction of the map $h$  to any facet $\face$ (i.e. a
$2$-dimensional simplex of the triangulation identified to its image in
$\Sigma$) of
$\scrT$ is smooth. In particular, the restriction of the pull-back
$h^*\omega|_\face$ along  a given facet of the triangulation makes sense. 
\begin{dfn}
	\label{dfn:psisotropic}
We say that a piecewise
smooth map $h:\Sigma\to\RR^{2n}$, endowed with an adapted triangulation, is
	isotropic, 
	if 
	$$h^*\omega|_\face=0
	$$
	for
every facet $\face$ of the triangulation.
\end{dfn}
\begin{rmk}
	\label{rmk:ae}
	Definition~\ref{dfn:psisotropic} does not depend 
	on the choice of an adapted triangulation, so that we can talk of
	piecewise smooth isotropic maps without any reference to an underlying
	triangulation.
	In fact, it is easy the see that for a piecewise smooth map
	$h:\Sigma\to\RR^{2n}$, the pullback $h^*\omega$ is defined almost
	everywhere. Furthermore $h$  is isotropic if, and only if, $h^*\omega= 0$
	almost everywhere, which does not involve the choice of an adapted
	triangulation.
\end{rmk}

\subsubsection{Norms for piecewise smooth maps}
 The usual
$C^0$-norm for maps $h:\Sigma\to \RR^{2n}$ is given by
$$
\|h\|_{C^0} = \sup_{x\in\Sigma}\|h(x)\|
$$
and more generally, we define
the norm on a domain $K\subset \Sigma$ 
by 
$$
\|h\|_{C^0(K)} = \sup_{x\in K}\|h(x)\|.
$$
If $\Sigma$ is assumed to be compact,  piecewise smooth maps  have finite $C^0$-norm
since they are continous.
However, $C^1$-norm of a piecewise smooth map
$h:\Sigma\to \RR^{2n}$ does not seem to be as standard as  the $C^0$-norm.
Lacking some standard classical reference, we ought to give our own definition.
For this purpose, let $\scrT$ be a triangulation of $\Sigma$ adapted to the piecewise
smooth map~$h$.
The restriction $h|_\face$ of $h$ to every facet $\face$ of $\scrT$ is smooth and
we can make sense of the norm
$$
 \|dh\|_{C^0(K\cap\face)} = \sup_{x\in K\cap \face} \|dh|_{\face,x}\|
$$
where the norm of $df$ is taken with respect to the operator norm induced
by the choice of a Riemannian metric $g_\Sigma$ on $\Sigma$ and
the Euclidean inner product $\|\cdot\|$ on $\RR^{2n}$. 
Using the notation $\fC_2(\scrT)$ for the set of facets of the triangulation, 
put 
\begin{equation}
	\label{eq:c0ps}
	\|dh\|_{C^0(K)}= \sup_{\face \in\fC_2(\scrT)}\|dh\|_{C^0(K\cap \face)}.
\end{equation}
The $C^1$-norm is eventually defined by
$$
\|h\|_{C^1( K)} = \|h\|_{C^0(K)} +  \|dh\|_{C^0(K)},
$$
and when $K=\Sigma$, we simply write the norm $\|h\|_{C^1}$.
\begin{rmk}
	An alternate definition could be given along the lines of
	Remark~\ref{rmk:ae}: since $h$ is piecewise smooth, its
	differential $dh$ makes sense almost everywhere. Thus the 
	$C^0$-norm of 
	$dh$ makes sense as an \emph{essential supremum}.
Therefore, this norm does not depend on the choice of triangulation
adapted to a piecewise smooth map $h$. Furthermore $\|h\|_{C^1}$ is 
finite for piecewise smooth maps if $\Sigma$ is compact.
\end{rmk}

\subsection{Embeddings and immersions}
Recall that a \emph{topological embedding}
$f:\Sigma\to \RR^{2n}$ is a continuous
map, such that the restriction $f:\Sigma\to f(\Sigma)$ is a homeomorphism,
where $f(\Sigma)\subset \RR^{2n}$
is endowed with the induced topology. We say that $f$ is a 
\emph{topological
immersion}, if $f$ is  locally a topological embedding. 
If $f$ is not assumed to be smooth, typically piecewise smooth or piecewise
linear, we will say that $f$ is an \emph{immersion} (resp. an
\emph{embedding}) if it is a \emph{topological immersion} (resp.
\emph{embedding}).

A standard application of the implicit function theorem shows that smooth
immersions are also topological immersions. 
There is an easy
characterization of  piecewise linear maps which are topological immersions
thanks to Lemma~\ref{lemma:inj}. Since this lemma is a classical result of
piecewise linear geometry and is not used in this paper, we do not give
any proof here.
\begin{lemma}
	\label{lemma:inj}
	A piecewise linear map $f:\Sigma\to \RR^{2n}$ is a topological
	immersion if, and only if, $f$ is locally  injective. If $\Sigma$
	is closed, and $f$ is globally injective, then it is a topological
	embedding.
\end{lemma}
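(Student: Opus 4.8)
\emph{Approach.} The plan is to treat this as an exercise in point-set topology: although the statement is phrased for piecewise linear maps, the argument only uses that $f$ is continuous and that $\Sigma$ is a $2$-manifold (equivalently, that it carries a triangulation), so the piecewise linear hypothesis is merely a convenient setting. Note also that, according to the definitions above, being ``locally a topological embedding'' does not require the images of the local charts to be open in $f(\Sigma)$; this is exactly why local injectivity, and nothing more, turns out to be the right condition. The forward implication of the first assertion is then immediate: if $f$ is a topological immersion, every $x\in\Sigma$ has an open neighbourhood $U$ on which $f$ restricts to a homeomorphism onto $f(U)$, hence in particular $f|_U$ is injective, so $f$ is locally injective.

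\emph{The converse.} Here I would argue locally around a fixed $x\in\Sigma$. First, by local injectivity, choose an open neighbourhood $U\ni x$ on which $f$ is injective. Then shrink it to a \emph{compact} neighbourhood: since $\Sigma$ is a surface, there is a compact neighbourhood $K$ of $x$ with $K\subset U$ --- for instance a closed coordinate disk centred at $x$, or, to stay in the piecewise linear category, a small closed simplicial neighbourhood of $x$ for a triangulation adapted to $f$ refined enough to lie inside $U$. On $K$ the map $f|_K\colon K\to f(K)\subset\RR^{2n}$ is a continuous bijection from a compact space onto a subspace of the Hausdorff space $\RR^{2n}$, hence a homeomorphism onto $f(K)$. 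Restricting this homeomorphism to the interior $\mathrm{int}(K)$, which is an open neighbourhood of $x$, and using that the subspace topology is transitive, I conclude that $f$ maps $\mathrm{int}(K)$ homeomorphically onto its image in $\RR^{2n}$, i.e. $f$ is a topological embedding near $x$. As $x$ is arbitrary, $f$ is a topological immersion.

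\emph{The embedding statement.} The last assertion is the classical fact that a continuous injection from a compact space into a Hausdorff space is a homeomorphism onto its image: if $\Sigma$ is closed and $f$ is globally injective, then $f\colon\Sigma\to f(\Sigma)$ is a continuous bijection from a compact space onto a Hausdorff space, hence a homeomorphism, that is, a topological embedding.

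\emph{Main obstacle.} I do not expect a genuine obstacle. The only point needing a little care is the upgrade from ``injective on an open neighbourhood'' to ``injective on a compact neighbourhood'', which is precisely where the manifold (or triangulation) structure of $\Sigma$ enters; everything else is the ``compact-to-Hausdorff'' principle applied twice.
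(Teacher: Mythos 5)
The paper gives no proof of this lemma at all: it remarks that the result is ``a classical result of piecewise linear geometry'' which is not actually used elsewhere, and moves on. So there is no argument in the paper to compare against. Your blind proof, on the other hand, is correct and complete. The forward direction and the embedding statement are immediate applications of the standard ``continuous bijection from compact to Hausdorff is a homeomorphism'' principle, and the only non-trivial step is the converse of the first assertion, which you handle correctly: local injectivity gives an open $U$ with $f|_U$ injective, local compactness of the manifold $\Sigma$ gives a compact neighbourhood $K$ of $x$ inside $U$, the compact-to-Hausdorff principle makes $f|_K\colon K\to f(K)$ a homeomorphism, and transitivity of the subspace topology lets you restrict to $\mathrm{int}(K)$ to obtain the required local topological embedding. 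Your observation that the piecewise linear hypothesis plays no role and that the argument works for any continuous map from a (locally compact Hausdorff) manifold is accurate, and it is a genuinely cleaner framing than the ``PL geometry'' folklore reference the paper appeals to; the one thing the PL hypothesis would buy in a more simplicial proof is an explicit compact neighbourhood (the closed star of $x$ in an adapted triangulation), which you in fact mention as an alternative choice of $K$.
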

The notion of \emph{polyhedral surface} was used
in Corollary~\ref{cor:main}, as well as the condition
 of a polyhedral surface being an approximation close to a smooth
surface, in the $C^1$-sense. The accurate definitions are  given below.
\begin{dfn}
	\label{def:close}
	A polyhedral surface,  $S_0 \subset \RR^{2n}$ is the 
	image of a smooth surface  $\Sigma$ by some piecewise linear map $h:\Sigma\to
	\RR^{2n}$.  

	If $h$ is a topological immersion (resp. embedding), then $S_0$ is
called an immersed (resp. embedded) \emph{polyhedral surface} of $\RR^{2n}$. 
	If $h$ is isotropic, in the sense of piecewise smooth isotropic
	maps, we say that the polyhedral surface $S_0$ is
	isotropic. In the case $n=2$, we say that $S_0$ is Lagrangian, instead
	of isotropic.

	For $\epsilon>0$, we say that a polyhedral surface $S_0$ of
	$\RR^{2n}$ is $\epsilon$-close to a
	smoothly immersed surface  $S$ of $\RR^{2n}$, in the
	$C^k$-sense (where $k=0$ or $1$), if there exists a smooth
	surface $\Sigma$ together with
	\begin{itemize}
		\item a smooth  immersion $f:\Sigma\to
			\RR^{2n}$ such that $f(\Sigma)=S$,
		\item a piecewise linear map $h:\Sigma\to\RR^{2n}$ such that
			$h(\Sigma)= S_0$,
\end{itemize}
with the property that
$$
	\|f-h\|_{C^k}\leq \epsilon,
$$
where the $C^1$-norm is defined using the Riemannian metric $g_\Sigma$ induced on $\Sigma$ by the canonical Euclidean
	structure of $\RR^{2n}$ and the immersion $f:\Sigma\to\RR^{2n}$.

	A sequence of polyhedral surfaces $S_N$ of $\RR^{2n}$, is called an
	approximation of a smooth surface $S$, in the $C^k$-sense, if there exists a sequence
	$\epsilon_N$ such that $S_N$ is $\epsilon_N$-close to $S$ in the
	$C^k$-sense for every
	$N$, and $\lim \epsilon_N=0$.
\end{dfn}
\begin{rmk}
The notion of $\epsilon$-closeness of Definition~\ref{def:close}
	depends on the choice of a Riemannian 
	metric $g_\Sigma$ on $\Sigma$. A choice has to
	be made for $g_\Sigma$. Here, the  metric $g_\Sigma$ 
	is chosen as the one induced by the ambient Euclidean space $\RR^{2n}$ and
	the choice of an immersion $f:\Sigma\to\RR^{2n}$. 
	Such a choice is somewhat arbitrary. Nevertheless, 
	this choice is not crucial 
	for our purpose, since
	another choice of Riemannian metric $g_\Sigma$ leads to
	an equivalent $C^1$-norm. Furthemore, the notion of approximation by
	a sequence of polyhedral surfaces does not depend on the choice of metric $g_\Sigma$.
\end{rmk}
The $C^1$-norm of piecewise  smooth maps is designed in such a way 
that various properties, which are classical in the  smooth case,
extend naturally in the piecewise smooth setting. 
The next proposition belongs to this category. Although the arguments are 
quite standard, a proof ought
to be given, since we rely on this result for the proof of our main theorem
and it seems impossible to find a clear reference for the proof.
\begin{prop}
	\label{prop:approx}
	Assume $\Sigma$ is a smooth closed 
	surface and $f:\Sigma\to \RR^{2n}$ is a
	smooth embedding (resp. immersion).  Then, there 
	exists $\epsilon>0$, such
	that for every piecewise smooth map $h:\Sigma\to \RR^{2n}$,
	the condition
	$$
	\|f-h\|_{C^1}\leq \epsilon
	$$
	implies that  $h$ is a
topological embedding (resp. immersion).
\end{prop}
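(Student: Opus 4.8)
The plan is to derive both assertions from two uniform estimates on the compact surface $\Sigma$: a local lower bound for the smooth map $f$, and a mean-value (Lipschitz) upper bound for the piecewise smooth error $h-f$ controlled by $\|h-f\|_{C^1}$. First I would record that, $f$ being a smooth immersion of a compact surface, a routine compactness argument produces constants $\delta>0$ and $c>0$ with
$$
\|f(x)-f(y)\|\;\geq\;c\,d_\Sigma(x,y)\qquad\text{whenever } d_\Sigma(x,y)\leq\delta,
$$
$d_\Sigma$ denoting the distance of $g_\Sigma$: were this false one extracts $x_k\neq y_k$ with $d_\Sigma(x_k,y_k)\to 0$ and $\|f(x_k)-f(y_k)\|/d_\Sigma(x_k,y_k)\to 0$, passes to a subsequence converging to a common point $p$, and in a chart around $p$ finds that the difference quotients converge to $df_p$ applied to a unit vector, which is nonzero since $df_p$ is injective.

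The \emph{technical heart} is a mean-value inequality for piecewise smooth maps: I claim there is $\delta'>0$, depending only on $g_\Sigma$, such that every piecewise smooth $g\colon\Sigma\to\RR^{2n}$ satisfies $\|g(x)-g(y)\|\leq 2\,\|dg\|_{C^0}\,d_\Sigma(x,y)$ whenever $d_\Sigma(x,y)\leq\delta'$. To prove this I would fix a triangulation adapted to $g$ and, given nearby $x,y$, join them by a path $\gamma$ of length at most $2\,d_\Sigma(x,y)$ that meets the $1$-skeleton of the triangulation transversally and avoids its $0$-skeleton; such $\gamma$ is obtained as a generic small perturbation of a short path (a minimizing geodesic, or a straight segment in a chart), using that the $1$-skeleton is a finite union of smooth arcs. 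Then $g\circ\gamma$ is piecewise $C^1$ in one variable, hence absolutely continuous, with $\|(g\circ\gamma)'(t)\|\leq\|dg\|_{C^0}\,\|\gamma'(t)\|$ off a finite set of times, so integration gives the bound. This transversality/perturbation step is the main obstacle: unlike in the smooth case one cannot integrate $dh$ directly along a geodesic, since $h$ need not be differentiable along it, and this is presumably the ingredient for which a clean reference is lacking.

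With these in hand the proof is quick. Put $\delta_0=\min(\delta,\delta')$; applying the mean-value inequality to $g=h-f$ and using $\|d(h-f)\|_{C^0}\leq\|h-f\|_{C^1}\leq\epsilon$, for $0<d_\Sigma(x,y)\leq\delta_0$ one gets
$$
\|h(x)-h(y)\|\;\geq\;\|f(x)-f(y)\|-\|(h-f)(x)-(h-f)(y)\|\;\geq\;(c-2\epsilon)\,d_\Sigma(x,y).
$$
Hence, as soon as $\epsilon\leq c/4$, distinct points at distance $\leq\delta_0$ have distinct images, so the restriction of $h$ to any ball $B=B(x_0,\delta_0/3)$ is injective; since $h|_{\overline B}$ is then a continuous injection of a compact space it is a homeomorphism onto its image, so $h|_B$ is a topological embedding and $h$ is a topological immersion. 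If moreover $f$ is an embedding, set $\rho=\min\{\|f(x)-f(y)\| : d_\Sigma(x,y)\geq\delta_0\}$, which is positive by compactness and injectivity of $f$; if $h(x)=h(y)$ with $x\neq y$ then the displayed estimate forces $d_\Sigma(x,y)>\delta_0$, whence $0=\|h(x)-h(y)\|\geq\|f(x)-f(y)\|-2\|h-f\|_{C^0}\geq\rho-2\epsilon$, which is impossible once $\epsilon<\rho/2$. So $h$ is globally injective, and being a continuous injection of the compact surface $\Sigma$ it is a homeomorphism onto its image, i.e. a topological embedding. Taking $\epsilon=c/4$ in the immersion case and $\epsilon=\min(c/4,\rho/2)$ in the embedding case completes the argument.
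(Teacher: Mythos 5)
Your argument is correct, and it follows a genuinely different route from the paper's. The paper reduces to the immersion case, orthogonally projects $f$ and $h$ onto the affine tangent plane $F=f(x)+\Im\,df_x$, forms $\psi=\hat h\circ\hat f^{-1}$, and proves $\psi$ is a local homeomorphism by a contraction-mapping argument (a piecewise-smooth inverse function theorem), consistent with the fixed-point strategy the paper uses throughout. You instead establish two direct metric estimates — a local lower bound $\|f(x)-f(y)\|\geq c\,d_\Sigma(x,y)$ for the smooth immersion, and a Lipschitz upper bound $\|g(x)-g(y)\|\leq 2\|dg\|_{C^0}\,d_\Sigma(x,y)$ for any piecewise smooth $g$ at the scale of the convexity radius — and combine them to get local injectivity of $h$, then use compactness of closed metric balls to upgrade injectivity to a homeomorphism onto image. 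Both proofs hinge on the same hard ingredient, which you correctly identify as the crux: a mean-value inequality for piecewise smooth maps, obtained by perturbing a short path to be transversal to the $1$-skeleton and integrating the derivative off the breakpoints. The paper carries out essentially the same transversal-perturbation argument (approximating segments $[zz']$ by $C^1$-close curves avoiding vertices and meeting edges transversally) inside its contraction lemma, so the difficulty is identical. What your route buys is elementarity and self-containment — no fixed-point machinery, a safety factor of $2$ rather than a sharp constant (harmless here) — and an explicit treatment of the embedding case, which the paper dispatches in one sentence. What the paper's route buys is the marginally stronger conclusion that $h$ near any point is a topological graph over an open disc in the $2$-plane $F$ (an explicit continuous local inverse is constructed), along with notational coherence with the fixed-point principle used for the isotropic mesh construction elsewhere in the paper.
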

\begin{proof}
The case of embeddings in the proposition readily follows from the case
of immersions exactly as in the smooth setting.
	Hence, we just prove the proposition in the case of a smooth immersion 
	$f:\Sigma\to\RR^{2n}$. 
	Let $\epsilon$ be a positive real number, to be fixed later on,
	and 
	$h:\Sigma\to\RR^{2n}$,  be a piecewise smooth map, such that
$$
\|f-h\|_{C^1}\leq \epsilon.
$$
	We introduce the affine space 
	subspace $F$ of $\RR^{2n}$ at a point $x$ of $\Sigma$, defined by
	$$
	F=f(x)+\Im\; df_x,
	$$
	understood as the image  of the tangent 
	space $T_x\Sigma$ by the tangent map~$f_*$. The affine map 
	$$
	\pi:\RR^{2n}\to F
	$$
	denotes the orthogonal projection onto $F$ and the
	corresponding \emph{projected maps} are given by
	$$
	\hat f = \pi\circ f \quad\mbox{ and }\quad  \hat h=\pi\circ h.
	$$
	Since $\pi$ is an orthogonal projection, its linear part $\vec\pi$
	satisfies
	$$\|\vec \pi\|= 1,$$
	which leads to the estimate
	\begin{equation}
		\label{eq:fpp}
		\|\hat f - \hat h\|_{C^1}\leq\|f-h\|_{C^1} \leq \epsilon.
	\end{equation}
Since $f$ is a smooth
	immersion, the map $\hat f:\Sigma\to F$ is a local  diffeomorphism
	 at $x$. Let $\scrT$ be a triangulation of the surface $\Sigma$
	adapted to the piecewise smooth map $h$. We introduce the closed
	neighborhood  $\overline U$ of $x$ corresponding to 
	the \emph{star} of $x$, which is the union of simplices of $\scrT$, that
	contain $x$. We denote by $\overline V\subset F$ its image  $\hat f(\overline U)$.
	Up to passing to a refinement of the triangulation
	$\scrT$, we can assume that $\overline U$ is an arbitrarily small
	closed neighborhood of $x$. In particular, 
we may assume that the restriction
		$\hat f:\overline U\to \overline V$, is a diffeomorphism
	with  inverse $\hat f^{-1}:\overline V\to \overline U$. 

	As the star of $x$, the domain
		$\overline U$ carries an induced triangulation, obtained as the
		retriction of $\scrT$. We deduce a triangulation $\scrT'$ of $\overline V$,
		obtained as the image triangulation of $\overline U$ by
		$\hat f$.
	By construction, the map
	$$\psi :\overline V\to F
	\quad \mbox{	defined  by }\quad
	\psi= \hat	h \circ \hat f ^{-1}
	$$
	is piecewise smooth, with $\scrT'$ as an adapted triangulation.

	The next lemma is a local inverse function theorem adapted to the
	case of the piecewise smooth map $\psi$.
	\begin{lemma}
		\label{lemma:diffeo}
		If $\epsilon>0$ is chosen sufficiently small, then for every
		 piecewise smooth map $h:\Sigma\to \RR^{2n}$ such that $\|f-h\|_{C^1}\leq
		\epsilon$ and every $x\in\Sigma$, 
		the corresponding  map $\psi:\overline V\to F$, introduced above, is a local homeomorphism near $f(x)$. 
	\end{lemma}
	\begin{proof}[End of the proof of Proposition~\ref{prop:approx}]
	Assuming Lemma~\ref{lemma:diffeo} holds, we may 
	replace $\overline V$ by a smaller open neighborhood $V'$ of $f(x)$ in $F$, so  that 
	the retriction	$\psi:V'\to \psi (V')$ is a homeomorphism.
	Hence,	there exists a continuous inverse
		$\varphi :\psi( V')\to  V'$ of $\psi:V'\to\psi(V')$.
		We define $U'$ as the open neighborhood $U'=\hat f^{-1}(V')$
		of $x$.
It follows that the map $\pi:h( U')\to \psi( V')$ is a
	homeomorphism onto its image. Indeed 
	$
	h\circ \hat f^ {-1}\circ \varphi: \psi( V') \to h( U') 
$
	is a continuous inverse of $\pi:h( U')\to \psi( V')$.
	Thus, $h( U')$ is
	homeomorphic to the open set $\psi( V')$
	of the vector space $F$. This shows
	that $h$ is a topological immersion, which completes the proof of
	Proposition~\ref{prop:approx}, in the case of an immersion.
	\end{proof}
\begin{proof}[Proof of Lemma~\ref{lemma:diffeo}]
	By Inequation~\eqref{eq:fpp}, we have the $C^0$-control
$$
	\|\psi (z) -z\|= \left \|\hat h(\hat f ^{-1}(z))- \hat f(\hat
	f^{-1}(z))\right \|\leq \epsilon,
$$
for every $z\in \overline V$, or, in other words
	\begin{equation}\label{eq:contr1}
	\|\psi -\id\|_{C^0(\overline V)}\leq \epsilon.
	\end{equation}
The differential of $\psi$ does not make sense everywhere, since the map is only
piecewise smooth with respect to the triangulation $\scrT'$ of $\overline
	V$.
	For every point
	$z\in \overline V$  contained in the interior of a facet of $\scrT'$, we have
	by~\eqref{eq:fpp}
	\begin{equation}
		\label{eq:fpp3}
		\left \|(d\hat h \circ d\hat f^{-1})_z -\id\right \|\leq
	\|\hat h - \hat f\|_{C^1}\|d\hat f^{-1}_z\|\leq \epsilon \|d\hat
	f^{-1}_z\|.
	\end{equation}
	The term $\|d\hat f^{-1}_z\|$ can be controlled uniformly thanks to
	the next lemma:
	\begin{lemma}
		\label{lemma:fpp}
There exists a constant $C>0$ such that for every $x\in \Sigma$, there
		exists a refinement of the triangulation $\scrT$, such that
		the diffeomorphism $\hat f: \overline U \to \overline V$ defined above satisfies
		$$
		\|d\hat f^{-1}_z\|\leq C
		$$
		for every $z\in \overline V$.
	\end{lemma}
	\begin{proof}
Since $f$ is an immersion and $\Sigma$ is compact, there exists a constant
		$C>0$, such that for every $x\in\Sigma$ and $u\in
		T_x\Sigma$, we have
$$
	\frac 2C \|u \|\leq 	\|df_x\cdot u\|.
$$
By definition $d\hat f = \vec \pi \circ df$, where $\vec \pi$ is the linear part of
		$\pi$. Furthermore  
		$$d\hat f_x=\vec \pi\circ df_x = df_x,$$
		since $\vec \pi$ is a projection onto $\Im\; df_x$.
		Hence,  for every tangent vector $w\in T_{f(x)}\overline V$,
$$
		\left \|d\hat f^{-1}_{f(x)}\cdot w \right \|\leq \frac C
		2\|w\|,
$$
i.e.
$$
		\left \|d\hat f^{-1}_{f(x)}\right \| \leq \frac C2.
$$
	Recall that $\overline U$ is the star of $x$ with respect to the
		triangulation $\scrT$. Up to passing to a refinement of the triangulation $\scrT$, we may
		assume that $\overline U$ is arbitrarily
		small, so that by the $C^1$-continuity  of $f$, 
		$$
		\|d\hat f_z^{-1}\|\leq C
		$$
		for every $z\in \overline V=\hat f(\overline U)$,
		which proves the lemma.
	\end{proof}
	In the rest of the proof of Lemma~\ref{lemma:diffeo}, 
	we  assume that a refinement of  $\scrT$ is chosen, so that
	the conclusion of Lemma~\ref{lemma:fpp} is met.
		By~\eqref{eq:fpp3} and Lemma~\ref{lemma:fpp}, we deduce
		that for every point $z$ of $\overline V$ contained in the interior
		of a facet of $\scrT'$, we have
	\begin{equation}\label{eq:contr2}
		\|d \psi_z - \id\|\leq \epsilon C.
	\end{equation}
		We choose $\epsilon =\frac 12\min\big (\frac 12
		, \frac 12 C^{-1}\big )$.
			Then by~\eqref{eq:contr1} and~\eqref{eq:contr2}
\begin{equation}
	\label{eq:contract0}
		\|\psi-\id\|_{C^1(\overline V)}\leq \frac 12.
\end{equation}
At this stage, it is natural
to consider the fixed points of the map 
	\begin{equation}\label{eq:contract}
		\Phi_y(z)= z- \psi(z) +y,
	\end{equation}
	defined for $y\in
	F$ and $z\in \overline V$. 
	\begin{lemma}
		The map $\Phi_y:\overline V\to F$ is a
		$\frac 12$-contracting map on
		a sufficiently small closed ball $\overline B_r$ of
		$\overline V$ centered at $f(x)$.
	\end{lemma}
	\begin{proof}
	This lemma relies on a version of the mean value theorem, in the context of
piecewise smooth functions.
		Obviously 
		\begin{equation}\label{eq:contract3}
		\Phi_y(z)-\Phi_y(z')= (\psi(z')-z')-(\psi(z)-z).
		\end{equation}
		We choose a closed Euclidean ball $\overline B_r\subset F$
		contained in $V$ and
		centered at $f(x)$ with radius $r>0$.
		For $z,z'\in\overline B_r$, the segment $[zz']$ is
		contained in $\overline B_r$ hence in $\overline V$.
	The mean value
		theorem  applies  to the
		function $\psi$. However some justification is needed
		since $\psi$  is only piecewise smooth:
		by transversality,
		the segment $[zz']$ can be approximated,
		in the $C^1$-sense, by 
		a smooth parametric curve 
		$\gamma:[0,1]\to \overline V$, such that
		 $\gamma(0)=z$, $\gamma(1)=z'$ and for every $0<t<1$
		\begin{itemize}
			\item
				$\gamma(t)$ does not go through a vertex of the
		triangulation $\scrT'$ of $\overline V$.
	\item Furthermore
		$\gamma(t)$ intersects transversaly the $1$-skeleton of the
		triangulation.
		\end{itemize}
		Such a curve $\gamma$ intersects the $1$-skeleton of
		$\scrT'$ only a finite number of
		times $0=t_0<t_1<\cdots <t_k=1$. Otherwise $\gamma(t)$ is
		contained in the interior domain of 
		facets. Then
		$$
		\Phi_y(\gamma(t_{j+1})) - \Phi_y(\gamma(t_j)) =
		\int_{t_j}^{t_{j+1}} d\Phi_y|_{\gamma(t)}\cdot \gamma'(t)
		\; dt.
		$$
		Since $d\Phi_y|_{\gamma(t)}=\id -d\psi|_{\gamma(t)} $  for every $t\in
		(t_j,t_{j+1})$, we deduce by~\eqref{eq:contract0} that
		$$
		\Big \|\Phi_y(\gamma(t_{j+1}))
		- \Phi_y(\gamma(t_j))\Big \| \leq 
		\frac {  L_j}2.
		$$
		where $L_j$ is the length of the curve $\gamma(t)$ from 
		$t_j$ to $t_{j+1}$.
		Adding all the identities together and the triangle
		inequality give the result
		$$
		\Big \|\Phi_y(z)-\Phi_y(z')\Big \|\leq \frac {L(\gamma)}2, 
		$$
		where $L(\gamma)$ is the length of $\gamma$.
		Passing to the
		limit with a sequence of curves $\gamma$ converging in the
		$C^1$-sense towards $[zz']$ 
gives the inequality
		$$
			\Big \|\Phi_y(z)-\Phi_y(z')\Big \|\leq \frac 12\|z-z'\|
			$$
			for every $z,z'\in\overline B_r$, which proves the
			lemma.
	\end{proof}
	The end of the proof of Lemma~\ref{lemma:diffeo} 
is based on the fixed point principle, applied to the map $\Phi_y$. Indeed,
a solution of $\Phi_y(z)=z$ satisfies $\psi(z)=y$, by definition, which
provides a construction for the inverse of $\psi$.
By \cite[Proposition
6.3.1]{JRT}, the  equation  $\Phi_y(z)=z$ admits a unique solution $z=\varphi(y)\in
\overline B_r$ for every $y\in \overline B_{r/2}$. This defines a
 local inverse $\varphi:\overline B_{\frac r2} 
\to \overline B_r$ of the map $\psi$. 

The fact that $\varphi$ is continuous is classical: for every $y,y'\in \overline
B_{r/2}$, we can write
\begin{align*}
	\|\varphi(y)-\varphi(y')\| & =
	\|z-z'\|\\
	&= \|\Phi_y(z)-\Phi_{y'}(z')\|\\
	&=
\|\Phi_y(z)-\Phi_y(z') + y-y'\|\\
	&\leq \frac 12\|z-z'\| +\|y-y'\|,
\end{align*}
which implies
$$
\frac 12\|\varphi(y)-\varphi(y') \|= \frac 12 \|z-z'\| \leq \|y-y'\|.
$$
Therefore $\varphi$ is a $2$-Lipschitz map  on the ball $\overline
B_{r/2}$. In particular $\varphi$ is continuous.
In conclusion  $\psi$ is a local
homeomorphism near $f(x)$. 
\end{proof}
This completes the proof of Proposition~\ref{prop:approx}.
\end{proof}
Proposition~\ref{prop:approx} applies in the special case  of piecewise
linear maps, which can be stated as the following corollary:
\begin{cor}
	\label{cor:approx}
	Let $\Sigma$ be a smooth closed surface and $f:\Sigma\to \RR^{2n}$
	a smooth immersion (resp. embedding).
	Any piecewise linear map $h:\Sigma\to\RR^{2n}$
	sufficiently close to $f$, in the $C^1$-sense,
	is  a topological immersion ( resp.  embedding). 

	Equivalently, any polyhedral surface sufficiently close, in the
	$C^1$-sense, to a smoothly immersed (resp. embedded) closed 
	surface, is an
	immersed (resp. embedded) polyhedral surface.
\end{cor}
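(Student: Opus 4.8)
The plan is to deduce Corollary~\ref{cor:approx} directly from Proposition~\ref{prop:approx}, the only work being to unwind the definitions. First I would recall that, by the Remarks following the definition of piecewise linear structures, every piecewise linear map $h:\Sigma\to\RR^{2n}$ is piecewise smooth with respect to a suitably refined adapted triangulation. Hence Proposition~\ref{prop:approx} applies verbatim: given a smooth immersion (resp. embedding) $f:\Sigma\to\RR^{2n}$ on a smooth closed surface, there is an $\epsilon>0$ such that the condition $\|f-h\|_{C^1}\leq\epsilon$ forces the piecewise linear map $h$ to be a topological immersion (resp. embedding). This is exactly the first assertion of the corollary.

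For the reformulation in terms of polyhedral surfaces, I would unwind Definition~\ref{def:close}. Suppose $S$ is a smoothly immersed (resp. embedded) closed surface of $\RR^{2n}$ and $S_0$ is a polyhedral surface that is $\epsilon$-close to $S$ in the $C^1$-sense, with $\epsilon>0$ to be constrained below. By definition there exist a smooth closed surface $\Sigma$, a smooth immersion (resp. embedding) $f:\Sigma\to\RR^{2n}$ with $f(\Sigma)=S$, and a piecewise linear map $h:\Sigma\to\RR^{2n}$ with $h(\Sigma)=S_0$, such that $\|f-h\|_{C^1}\leq\epsilon$, where the $C^1$-norm is taken with respect to the metric $g_\Sigma$ induced by $f$. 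Choosing $\epsilon$ smaller than the threshold provided by the first part applied to this particular $f$, we conclude that $h$ is a topological immersion (resp. embedding), and therefore $S_0=h(\Sigma)$ is, by definition, an immersed (resp. embedded) polyhedral surface. The equivalence of the two formulations is thus immediate once Definition~\ref{def:close} is spelled out.

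I do not expect any genuine obstacle here, since all the analytic content — the fixed point principle and the piecewise smooth inverse function theorem of Lemma~\ref{lemma:diffeo} — is already contained in the proof of Proposition~\ref{prop:approx}. The only mild subtlety worth flagging is that the admissible $\epsilon$ in both formulations depends on the smooth immersion $f$ (equivalently on $S$ together with the chosen parametrization and metric $g_\Sigma$), not merely on $\Sigma$; this dependence is harmless because the statement fixes the smooth surface first and only then asks for nearby polyhedral approximations.
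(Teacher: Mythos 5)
Your proposal is correct and follows exactly the route the paper takes: the paper presents Corollary~\ref{cor:approx} as the specialization of Proposition~\ref{prop:approx} to piecewise linear maps (which are piecewise smooth for a suitably refined adapted triangulation), with the second assertion obtained by unwinding Definition~\ref{def:close}. The paper gives no separate proof, treating the corollary as immediate, and your write-up supplies precisely that bookkeeping; the observation that $\epsilon$ depends on the chosen immersion $f$ (and hence on $g_\Sigma$) is a fair point to flag, and matches the remark following Definition~\ref{def:close}.
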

\begin{rmk}
It is likely that an alternate proof of the above corollary could be given
	relying on  Lemma~\ref{lemma:inj}.
\end{rmk}

\subsection{Overview of the Jauberteau-Rollin-Tapie techniques}
\label{sec:jaub}
A leisurely introduction to
the crucial tools of~\cite{JRT} used in the proof of
Theorem~\ref{theo:main}  is given in this section. 
The interested reader is advised to keep the
article~\cite{JRT} within reach,  as a  detailed handbook about
 the
constructions that will only be sketched here.

\subsubsection{Conformal cover}
The Jauberteau-Rollin-Tapie construction starts from a smooth isotropic 
immersion (or embedding) 
$$
\ell:\Sigma\to\RR^{2n},
$$
where $\Sigma$ is a $2$-dimensional smooth torus. The Riemannian 
metric $g_\Sigma=\ell^* g$, induced on $\Sigma$ by
the canonical Euclidean metric $g$ of $\RR^{2n}$, is conformally flat, by the classical
uniformization theorem. Hence there
exists a covering map 
$$
p:\EE\to \Sigma
$$
from a $2$-dimensional Euclidean
affine space $\EE$ with Riemannian metric $g_\EE$, 
such that $p$  is a conformal map with respect to
$g_\EE$ and $g_\Sigma$.  The group of deck
transformations of $p$ is identified to a lattice $\Gamma$ of
the vector space $\overrightarrow {\EE}$ associated to the affine space 
$\EE$,
acting by translations. 
Thus, the flat quotient metric $g_\sigma$ induced by $g_\EE$ on $\Sigma$
and $g_\Sigma$ are in the same conformal class. More precicely, there
exists a smooth positive function $\theta:\Sigma \to \RR$, such that
$g_\Sigma=\theta g_\sigma$
and $p$  induces
a conformal  diffeomorphism
$$
\Sigma \simeq \EE/\Gamma.
$$
An oriented orthonormal frame 
$(O,\vec E_1,\vec E_2)$ of the affine space $\EE$, induces an affine isometry  
$$r:\RR^2\to
\EE,
$$
defined by $r(0)=O$, $r(e_1)=O+\vec E_1$ and $r(e_2)=O+\vec E_2$, where
$(e_1,e_2)$ is the canonical basis of $\RR^2$. A notion of
\emph{non degenerate pair} $(p\circ r,\ell)$ is introduced in~\cite[\S
5.3]{JRT}. This technical condition is important
for the application of the fixed point principle later on. 
However, non degeneracy can always be achieved 
 by~\cite[Proposition 5.3.3]{JRT}, modulo a suitable choice of
isometry~$r$. Thus, we could always assume that the non degeneracy
condition holds in the rest of our paper.

\subsubsection{Standard quadrangulations}
For every integer $N>0$, there is a standard lattice 
$\Lambda_N\subset \RR^2$ given by
$$\Lambda_N=\ZZ \frac {e_1}N \oplus \ZZ \frac {e_2}N.
$$
The lattice $\Lambda_N$ is understood as the set of vertices of the standard
\emph{quadrangulation} $\cQ_N(\RR^2)$ of $\RR^2$, tiled by squares of sidelength
$N^{-1}$, according to the Figure~\ref{figure:quad}, where
\begin{itemize}
	\item the vertices are denoted
		$\vert_{kl}=N^{-1}(ke_1+le_2)\in\Lambda_N$;
	\item $\face_{kl}$ are the facets of the quadrangulation.
\end{itemize}
\begin{figure}[H]
\begin{pspicture}[showgrid=false](-3,-3)(3,3)
                          \psscalebox{.8}
           {
             \psline (-3,3) (3,3)
\psline (-3,1) (3,1)
\psline (-3,-1) (3,-1)
\psline (-3,-3) (3,-3)

\psline (-3,3) (-3,-3)
\psline (-1,3) (-1,-3)
\psline (1,3) (1,-3)
\psline (3,3) (3,-3)

%\psset{linecolor=blue,  linewidth=2pt,linestyle=solid}
%\psline{-}(-1,-1) (-1,1)
%\psline{-}(-1,-1) (1,-1)
%\psline{-}(-1,1) (1,1)
%\psline{-}(1,-1) (1,1)

\psset{linecolor=red, linewidth=2pt,linestyle=solid, fillstyle=solid, fillcolor=red}
\rput(-2,-2){$\face_{k-1,l-1}$}
\rput(0,-2){$\face_{k,l-1}$}
\rput(2,-2){$\face_{k+1,l-1}$}

\rput(-2,0){$\face_{k-1,l}$}
\rput(0,0){$\face_{kl}$}
\rput(2.1,0){$\face_{k+1,l}$}

\rput(-2,+2){$\face_{k-1,l+1}$}
\rput(0,+2){$\face_{k,l+1}$}
\rput(2,+2){$\face_{k+1,l+1}$}

\color{red}
\pscircle(-1,-1){.1}
\rput[tr](-1.1,-1.1){$\vert_{k,l}$}
\pscircle(1,-1){.1}
\rput[tl](1.1,-1.1){$\vert_{k+1,l}$}

\pscircle(-1,+1){.1}
\rput[br](-1.1,1.1){$\vert_{k,l+1}$}
\pscircle(1,1){.1}
\rput[bl](1.1,1.1){$\vert_{k+1,l+1}$}

%\color{blue}
%\rput[t](0.0,-1.1){$\edge_{1,kl}$}
%\rput[t](0.0,+0.9){$\edge_{1, k,l+1}$}

%\rput[t]{90}(-0.9,0.0){$\edge_{2,kl}$}
%\rput[t]{90}(1.1,0.0){$\edge_{2, k+1,l}$}

}
\end{pspicture}
\caption{Quadrangulation $\cQ_N(\RR^2)$}
\label{figure:quad}
\end{figure}

\subsubsection{Quotient quadrangulation}
The vector spaces $\RR^2$ is identified to $\overrightarrow \EE$ via the
linear isometry $\vec r$, which is the linear
part of the affine isometry $r:\RR^2\to\EE$. Hence, the 
lattices $\Lambda_N$ and $\Gamma$
can be understood as lattices of the same $2$-dimensional vector space
modulo the isomorphism $\vec r$.
If $\Gamma$ is a sublattice of 
$\Lambda_N$, we deduce that $\Sigma = \EE/\Gamma$ carries a
a  quotient quadrangulation $\cQ_N(\Sigma)$. Furthermore, the quotient
quadrangulation is acted on by  the residual action of $\Lambda_N$.
Unfortunately,  $\Gamma$ may 
not be a sublattice of $\Lambda_N$.
To get around this technical issue, 
we can make a suitable choice of a sequence of affine isomorphisms 
\begin{equation}\label{eq:ac}
	r_N:\RR^2\to\EE\quad \mbox{ such that }\quad r_N=r+\cO(N^{-1}),
\end{equation}
with the property that the image of $\Lambda_N$ by
$\vec r_N$ contains $\Gamma$ as a sublattice. The construction of the maps
$r_N$ is elementary and explained in details in~\cite[\S 3.2 ]{JRT}. 
\begin{rmk}
Although the maps $r_N$ are not isometric, they converge towards an
	isometry $r:\RR^2\to\EE$. In this sense, the maps $r_N$ are
	\emph{almost isometric}. It follows that the pull-back 
	metrics $ r_N^*g_\EE$ converge 
	towards the canonical Euclidean metric~$g_{\RR^2}$ and are uniformly
	commensurate with~$g_{\RR^2}$. In particular, any of these metrics could
	be used for uniform estimating purposes.
\end{rmk}

Given the almost isometric affine maps $r_N:\RR^2\to \EE$, the lattices
$\Lambda_N$ are now understood as lattice acting on $\EE$, with  $\Gamma$
acting as a sublattice of $\Lambda_N$.
We deduce that the quadrangulation $\cQ_N(\RR^2)$ descends as a quotient
quadrangulation of $\Sigma \simeq \EE/\Gamma$, denoted $\cQ_N$ or $\cQ_N(\Sigma)$.

\subsubsection{Quadrangular meshes}
\label{sec:quadmesh}
We
denote by $\fC_k(\cQ_N)$ the set  of $k$-cells of the quadrangulation
$\cQ_N$ and  by $\scrC^k(\cQ_N,X)$ the space
of functions on the set $\fC_k(\cQ_N)$, with values in some set $X$.
The vector space of \emph{quadrangular meshes}, denoted $\scrM_N$ or 
$\scrM(\cQ_N)$, on $\Sigma$ is defined by
$$
\scrM(\cQ_N)=\scrC^0(\cQ_N,\RR^{2n}).
$$
In other words, a quadrangular mesh $\tau\in\scrM_N$
is a $\RR^{2n}$-valued function on the space of the vertices of the quadrangulation $\cQ_N$.

Given a smooth map as $\ell:\Sigma\to \RR^{2n}$, there is a natural sequence
of \emph{approximations} of $\ell$  by quadrangular meshes
$$
\tau_N\in \scrM_N
$$
called the \emph{samples} of $\ell$ and
defined by 
$$
 \tau_N(\vert) = \ell(\vert),
$$
for every vertex $\vert$ of the quadrangulation~$\cQ_N$.
\begin{rmk}
If $\ell$ is isotropic, it is not clear at all whether and, in which sense,
	the samples  $\tau_N$ are isotropic. We are going to give a definition of isotropic quadrangular
	meshes, and show that the samples $\tau_N$ are
	almost isotropic.
\end{rmk}
\subsubsection{Isotropic quadrangular meshes}
Let $D$ be a compact
oriented surface of $\RR^{2n}$ with boundary $\del D$. By Stokes theorem
$$
\int _D\omega = \int_{\del D}\lambda,
$$
where $\lambda$ is the Liouville form. If $D$ is isotropic, the
integral of the Liouville form along it boundary $\del D$ vanishes.

Given a quadrangular mesh $\tau\in\scrM_N$, every facet $\face_{kl}$ of the
quadrangulation has an associated quadrilateral of $\RR^{2n}$, given by the
points $A_j=A_j(\tau,\face_{kl})$ for  $0\leq j\leq 3$, with
$$
A_0= \tau(\vert_{kl}),\quad
A_1= \tau(\vert_{k+1,l}),\quad
A_2= \tau(\vert_{k+1,l+1}),\quad
A_3= \tau(\vert_{k,l+1}).
$$
The quadrilateral $(A_0A_1A_2A_3)$ is called \emph{the quadrilateral of the
mesh $\tau$, along the facet
$\face_{kl}$}. 
This motivates the following
definition:
\begin{dfn}
	\label{dfn:quad}
	An oriented closed piecewise smooth curve $\gamma$ of $\RR^{2n}$ is called  isotropic
	if $\int_\gamma\lambda =0$. 

	In particular if $\gamma$ is a parametrization of
	the four edges of a quadrilateral, we say that the quadrilateral is
	isotropic.

	A quadrangular mesh $\tau\in\scrM_N$ 
	is called isotropic, if every quadrilateral of $\RR^{2n}$ 
	associated to a facet of the quadrangular mesh $\tau$ is isotropic.
\end{dfn}

Isotropic quadrangular meshes are solutions of an obvious system of
 quadratic equations, interpreted as the vanishing of a 
 \emph{discrete symplectic density}.
Given $\tau\in\scrM_N$ or a smooth function $f$ on $\Sigma$, it is often
convenient to work on the cover $p_N= p\circ r_N:\RR^2\to \Sigma$. The pull
back $\tilde f = f\circ p_N$ of $f$ is understood as a smooth
$\Gamma$-periodic function of $\RR^2$. Similarly, $\tilde \tau=\tau\circ
p_N$ is understood as a $\Gamma$-invariant quadrangular mesh of
$\cM(\cQ_N(\RR^2))=\scrC^0(\cQ_N(\RR^2),\RR^{2n})$. Most of the time, we
will work on the cover $\RR^2$ without any warning or special notations.

We then introduce the \emph{vectors fields}  
$$\scrU_\tau \quad \mbox{ and } \quad \scrV_\tau
\in\scrC^2(\cQ_N(\Sigma),\RR^{2n})
$$
defined on the cover by the formulas
$$
\scrU_\tau(\face_{kl}) = \frac N{\sqrt 2} 
\left (\tilde\tau(\vert_{k+1,l+1}) 
- \tilde\tau(\vert_{kl}) \right )
$$
and
$$
\scrV_\tau(\face_{kl}) = \frac N{\sqrt 2} 
\left (\tilde\tau(\vert_{k,l+1}) 
- \tilde\tau(\vert_{k+1,l}) \right )
$$
\begin{rmk}
The above vectors are precisely the diagonals of the quadrilateral  of
$\RR^{2n}$, 
given by the facet of $\tau$ associated to $\face_{kl}$
and renormalized by a factor~$\frac
N{\sqrt 2}$.
Thus $\scrU_\tau$ and $\scrV_\tau$ can be understood as finite difference 
 versions
of partial derivatives of $\tau$ in diagonal directions.
\end{rmk}

An easy calculation (cf.~\cite[\S 4.1]{JRT}) shows that the integral of the Liouville form along
the quadrilateral associated to the quadrangular mesh $\tau$ and the facet
$\face_{kl}$ is precisely 
$$
N^{-2}\omega(\scrU_\tau(\face_{kl}), \scrV_\tau(\face_{kl})).
$$
Notice that $N^{-2}$ is the Euclidean area of the facets of $\cQ_N(\RR^2)$. This
motivates the definition of the symplectic density
$$
\mu_N:\scrM_N\to \scrC^2(\cQ_N,\RR)
$$
given by
\begin{equation}
	\label{eq:density}
	\mu_N(\tau)  = \omega(\scrU_\tau,\scrV_\tau).
\end{equation}
By definition $\mu_N^{-1}(0)$ is precisely the set of isotropic
quadrangular meshes.

\subsubsection{Norms for quadrangular meshes}
\label{sec:norms}
Certain zeroes of $\mu_N$ were constructed in~\cite{JRT}, thanks to the fixed point
principle. Some adapted norms have to be introduced in order to carry out
the construction.
 There are two special \emph{diagonal} translations $T_u$ and $T_v$ acting
on facets or vertices of $\cQ_N(\RR^2)$ (or $\cQ_N(\Sigma)$), given by
$$
T_u(\vert_{kl}) = \vert _{k+1,l+1} \quad \mbox{ and } \quad
T_v(\vert_{k+1,l})= \vert _{k,l+1}
$$
with similar formulas for facets.
These operators induce finite difference operators on the space of
functions $\scrC^0$ or $\scrC^2$. For instance, we define for a
quadrangular mesh $\tau$
\begin{equation}\label{eq:fd}
	\frac {\del  \tau}{\del \vec u} = \frac N{\sqrt 2}(\tau\circ T_u -
 \tau)
\quad \mbox { and } \quad
\frac {\del  \tau}{\del \vec v} = \frac N{\sqrt 2}(\tau\circ T_v -
 \tau).
\end{equation}
Such finite differences, are analogues of the derivatives in
$(u,v)$-coordinates obtained by rotating the canonical basis $(e_1,e_2)$ of $\RR^2$ by
an angle $+\frac \pi 4$. More precisely by setting
\begin{equation}\label{eq:uv}
u= \frac{x+y}{\sqrt 2}, \quad  v=\frac {y-x}{\sqrt 2}.
\end{equation}
The finite differences induce $\cC^k_w$-norms on the space of 
functions $\scrC^j(\cQ_N)$ on the set of $j$-cells of $\cQ_N$. 
These norms are 
discrete versions of the $C^k$-norms for smooth functions (notice the
slightly different typography for the norms in the smooth and discrete cases). 
The $\cC^k_w$-norms are denoted 
$\|\cdot \|_{\cC^k_w}$, where $w$ stands for \emph{weak}. 
Indeed,  the
translations $T_u$ and $T_v$  only span  a sublattice of index~$2$ of
$\Lambda_N$. It follows that these weak norms do not control every finite
differences,  in particular in the directions  
\begin{equation}\label{eq:k}
	k_1=N^{-1}{e_1} \quad \mbox{ and }\quad
k_2=N^{-1}{e_2}.
\end{equation}
It is also possible to mimic  Hölder norms, with Hölder regularity
$\alpha\in(0,1)$ in the $T_u$ and $T_v$ directions. The corresponding
$\cC^{k,\alpha}_w$-norms
are denoted $\|\cdot \|_{\cC^{k,\alpha}_w}$. 
\begin{rmk}
More details about 
discrete weak norms and their properties
	can be found in \cite[\S 3.7]{JRT}. The reader may
	still wonder why such weak norms must be used here. Although we do
	not have a mathematical argument, there is some geometrical
	evidence,
	which tends to show why 
	weak norms are natural, and why they are the best one can expect for
	the discrete analysis of $\mu_N$.

	First, the space of solutions of the equation $\mu_N=0$ has 
	symmetries
	obtained from the \emph{shear action}, described in~\cite[\S 4.2]{JRT}.
	The shear action essentially pulls apart  two intertwined submeshes of
	a  given quadrangular mesh, by independent translations along two
	index $2$ sublattices of $\Lambda_N$. The shear action leaves
	the equation
	invariant.  This seems to indicate that  strong $C^1$ controls cannot
	be expected from the mere equations $\mu_N=0$ (cf. \cite[Figure 1]{JRT}).

	The second observation concerns the 
	 linearization of the equation $\mu_N=0$. For the fixed point
	 principle, it is convenient to
	 consider  variations of $\mu_N$ in some particular directions,
	 given by  discrete functions
	 on
	 $\Sigma$ (cf. \cite[Chapter 4]{JRT}). This idea is motivated 
	 by  a moment map interpretation of
	 the smooth equation (cf. \cite[Chapter 2]{JRT}), 
	 where we are looking for perturbations along
	 complexified orbits of the gauge group.
	Eventually, such perturbations are given by
	 functions (for the smooth  or discrete cases) on $\Sigma$, and the linearized
	 operator  is essentially a Laplacian~(cf.
	\cite[\S 2.3.3 and \S 4.6.4]{JRT}). However,  the
	sequence of discrete Laplacians does not converge towards the
	operator in the smooth setting, as $N$ goes to infinity. Instead,
	the sequence of discrete Laplacians
	converges towards
 another Fredholm operator, acting 
	on  pairs
	of smooth functions, 
	rather than  single functions (see \S\cite[Theorem
	5.1.1]{JRT}). This phenomenon may be regarded as some
	  infinitesimal  reminiscence
	of the shear action. 
 Hence, natural norms for our problem should 
	allow sequences of solutions of the linearized discrete problem to
	converge towards pairs of functions for the limiting Fredholm operator on
	the smooth surface.
	This feature is  the essence of the
	weak norms (cf. \cite[\S 3.8 and Example 3.8.4]{JRT}).

However, it is fairly
	possible that  simpler discretization schemes, other than
	quadrangulations, could be used. Of course, any relevant 
	suggestion would be
	most welcome.
\end{rmk}
\subsubsection{Existence of isotropic meshes}
We can quote the main technical tool for producing isotropic quadrangular
meshes:

\begin{theo}[\cite{JRT}]
	\label{theo:jrt}
	Given a smooth isotropic immersion $\ell:\Sigma\to \RR^{2n}$,
	there exists a sequence of isotropic quadrangular meshes
	$\rho_N\in\scrM_N$, such that
	$$
	\|\rho_N-\tau_N\|_{\cC^{0}} = \cO\left 
	(\|\mu_N(\tau_N)\|_{\cC^{0,\alpha}_w} \right )
	$$
	where $\tau_N$ are the samples of $\ell$.
\end{theo}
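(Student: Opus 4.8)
The plan is to deduce Theorem~\ref{theo:jrt} from the fixed point principle of \cite[Proposition 6.3.1]{JRT}, applied to a carefully chosen map whose fixed points are exactly the isotropic quadrangular meshes close to the sample $\tau_N$. First I would set up the linearization: following \cite[Chapter 4]{JRT}, one considers perturbations of a mesh $\tau$ along directions parametrized by discrete functions on $\Sigma$ (the discrete analogue of moving along complexified gauge orbits), and one computes the derivative of $\mu_N$ in these directions. This linearized operator is, up to lower order terms, the discrete Laplacian $\Delta_N$ associated to $\cQ_N$. The key analytic input, quoted from \cite[Theorem 5.1.1]{JRT}, is that $\Delta_N$ is uniformly invertible with respect to the weak norms $\cC^{k,\alpha}_w$ once the non-degeneracy hypothesis on the pair $(p\circ r_N,\ell)$ is arranged (which, as recalled in \S\ref{sec:jaub}, may always be assumed); this yields a right inverse $Q_N$ with $\|Q_N\|$ bounded independently of $N$ from $\cC^{0,\alpha}_w$ to $\cC^{0}$.

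Next I would write the equation $\mu_N(\tau)=0$ in fixed point form near the sample. Setting $\tau=\tau_N+\xi$ with $\xi$ in the range of the admissible perturbations, one has $\mu_N(\tau_N+\xi)=\mu_N(\tau_N)+L_N\xi+R_N(\xi)$, where $L_N$ is the linearization and $R_N$ collects the quadratic remainder coming from the bilinear dependence of $\mu_N$ on $\scrU_\tau,\scrV_\tau$ in \eqref{eq:density}. The fixed point map is then $\Phi_N(\xi)=-Q_N\big(\mu_N(\tau_N)+R_N(\xi)\big)$, so that $\Phi_N(\xi)=\xi$ forces $\mu_N(\tau_N+\xi)=0$. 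One checks that $\Phi_N$ is a contraction on the ball of radius $r_N=C\|\mu_N(\tau_N)\|_{\cC^{0,\alpha}_w}$ in the appropriate weak norm: the constant term has norm $\le\|Q_N\|\,\|\mu_N(\tau_N)\|_{\cC^{0,\alpha}_w}$, while $R_N$ is quadratic and the weak norms are designed so that the relevant finite differences of $\scrU_\tau,\scrV_\tau$ are controlled — this gives $\|R_N(\xi)-R_N(\xi')\|\lesssim (\|\xi\|+\|\xi'\|)\|\xi-\xi'\|$, hence a contraction on a small enough ball. Applying \cite[Proposition 6.3.1]{JRT} produces the fixed point $\xi_N$, and setting $\rho_N=\tau_N+\xi_N$ gives an isotropic mesh with $\|\rho_N-\tau_N\|_{\cC^0}\le\|\xi_N\|\le r_N=\cO(\|\mu_N(\tau_N)\|_{\cC^{0,\alpha}_w})$, as claimed.

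The main obstacle is the uniform-in-$N$ control. Two issues require care. First, one must verify that the quadratic remainder $R_N$ really is bounded, in the weak norms, by the square of the weak norm of its argument, uniformly in $N$; this hinges on the multiplicative properties of the $\cC^{k,\alpha}_w$-norms established in \cite[\S 3.7]{JRT} and on the fact that $\mu_N$ is a fixed polynomial expression in the diagonal finite differences \eqref{eq:fd}, so that no loss of derivatives or powers of $N$ occurs. Second, and more delicate, is the uniform invertibility of the discrete Laplacians: since $\Delta_N$ does \emph{not} converge to the smooth Laplacian but to a different Fredholm operator acting on pairs of functions, the bound $\|Q_N\|\le C$ is genuinely a theorem about the spectral behavior of the $\Delta_N$ in the weak norms, and this is exactly where the non-degeneracy condition on $(p\circ r_N,\ell)$ and the almost-isometric choice of the maps $r_N$ in \eqref{eq:ac} enter — this is the content of \cite[Theorem 5.1.1]{JRT}, which I would invoke as a black box. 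Granting these two uniform estimates, the contraction argument and the estimate on $\|\rho_N-\tau_N\|_{\cC^0}$ follow routinely.
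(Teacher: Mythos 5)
Your proof proposal is essentially the same approach as the paper's: both proceed via the Newton-type fixed point argument from \cite{JRT}, and both propagate the Green operator bound (invertibility of the discrete Laplacian in weak H\"older norms, which in turn requires the non-degeneracy hypothesis and the almost-isometric choice of $r_N$) to obtain the sharp estimate in terms of $\|\mu_N(\tau_N)\|_{\cC^{0,\alpha}_w}$ rather than the rough $\cO(N^{-1})$ of the original reference. In fact, your write-up reconstructs the interior of the cited argument somewhat more explicitly than the paper does: the paper contents itself with pointing to \cite[Theorem 6.3.2, Proposition 6.3.3]{JRT}, then notes that the fixed point $\phi_N = T_N(\phi_N)$ is controlled by $T_N(0) = -G_N(\mu_N(\tau_N))$ and quotes \cite[Proposition 6.1.2]{JRT} for the Green operator estimate, whereas you spell out the decomposition $\mu_N(\tau_N+\xi) = \mu_N(\tau_N) + L_N\xi + R_N(\xi)$, the construction of the contraction map, and the need for the quadratic remainder bound.

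One small imprecision worth flagging: the unknown in the \cite{JRT} fixed point scheme is a discrete scalar function $\phi_N$, with the mesh perturbation then recovered as $\rho_N - \tau_N = -J\delta^\star_N\phi_N$. The bound therefore really runs $\|\mu_N(\tau_N)\|_{\cC^{0,\alpha}_w} \gtrsim \|\phi_N\|_{\cC^{2,\alpha}_w} \gtrsim \|\delta^\star_N\phi_N\|_{\cC^{1,\alpha}_w} \gtrsim \|\rho_N - \tau_N\|_{\cC^0}$, i.e.\ the right inverse of the discrete Laplacian maps $\cC^{0,\alpha}_w$ to $\cC^{2,\alpha}_w$, and one then loses a derivative through $\delta^\star_N$ before dropping down to $\cC^0$. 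Your statement that $Q_N$ is ``bounded from $\cC^{0,\alpha}_w$ to $\cC^0$'' compresses this chain into a single assertion; the conclusion is the same, but as written it conflates the linearization acting on mesh perturbations $\xi$ with the one acting on gauge functions $\phi$, and it is the latter that is (close to) the discrete Laplacian. This does not affect the correctness of the plan, since you do note at the outset that the perturbations are parametrized by discrete functions.
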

\begin{proof}
	Although Theorem~\ref{theo:jrt} is essentially proved in
	 \cite[Theorem 6.3.2
	and Proposition
	6.3.3]{JRT}, the statement is slightly different.  In particular, 
	the estimate  $ \cO\left 
	(\|\mu_N(\tau_N)\|_{C^{0,\alpha}_w} \right )$ is  replaced
	with  a rough $\cO(N^{-1})$ estimate. 

	Nevertheless, the construction of $\rho_N$ proceeds from the fixed
	point principle applied to a certain map $T_N$ defined in \cite[\S
	6]{JRT}. The solution $\phi_N=T_N(\phi_N)$ is controlled in
	$\cC^{2,\alpha}_w$-norm by the term  $T_N(0)=-G_N(\mu_N(\tau_N))$,
	where $G_N$ is the Green operator of the discrete Laplacian, defined
	in
	\cite[\S 6.1]{JRT}.
	By \cite[Proposition 6.1.2]{JRT}, the $\cC^{0,\alpha}_w$-norm of
	$\mu_N(\tau_N)$ controls the $\cC^{2,\alpha}_w$-norm of
	$G_N(\mu_N(\tau_N))$. Eventually, the $\cC^{2,\alpha}_w$-norm of
	$\phi_N$ controls the $\cC^0$-norm of $\rho_N-\tau_N$, since $\rho_N=
	\tau_N-J\delta^\star_N\phi_N$, as defined by \cite[Proposition
	6.3.3]{JRT}, and the theorem follows.
\end{proof}
\begin{rmk}
	The key to obtain a $C^1$-control, as in Theorem~\ref{theo:main}, is to
improve the estimate of the \emph{error term} $\mu_N(\tau_N)$.
	This is precisely what is done in Proposition~\ref{prop:betterest},
	with  a $\cO(N^{-2})$-estimate, rather than a
	$\cO(N^{-1})$-estimate as in~\cite{JRT}.
\end{rmk}
\subsubsection{From quadrangulations to triangulations}
\label{sec:quadtri}
Quadrangular meshes do not provide piecewise linear maps in an obvious way.
They have to be completed into triangular meshes beforehand.
Indeed, a quadrilateral of $\RR^{2n}$ is not necessarily contained in a
$2$-plane. The idea is to fill  a quadrilateral $(ABCD)$ with a pyramid as in
Figure~\ref{figure:pyramid}.

\begin{figure}[H]
  \begin{pspicture}[showgrid=false](-2,-1)(2,1)
    \psline(-1,-1)(0,-1)(.5,0)(-0.5,0)(-1,-1)
    \psline(.5,1)(-1,-1)(0,-1)(.5,0)(-0.5,0)
    \psline(.5,1)(-1,-1)
    \psline(.5,1)(0,-1)
    \psline(.5,1)(.5,0)
    \psline(.5,1)(-0.5,0)

    \rput(-1.5,-1){$A$}
    \rput(0.5,-1){$B$}
    \rput(1.0,0){$C$}
    \rput(-1.0,0){$D$}

    \rput(0.1,1.1){$P$}

   \psset{fillstyle=solid,fillcolor=black}
   \pscircle(-1,-1){.1}
   \pscircle(0.5,0){.1}
   \pscircle(-0.5,0){.1}
   \pscircle(0,-1){.1}
   \pscircle(0.5,1){.1}
   
  \end{pspicture}
  \caption{Pyramid with apex $P$ and base $(ABCD)$}
  \label{figure:pyramid}
\end{figure}
\begin{dfn}
A pyramid is called isotropic if its four triangles from the apex $P$ are
contained in isotropic planes. 
\end{dfn}
Given a quadrilateral $(ABCD)$ of $\RR^{2n}$, we are looking for a point
$P\in\RR^{2n}$, such that the corresponding pyramid is isotropic. This
equation, bearing on $P$, is a linear system. 
The compatibility condition is
precisely given by the fact that the quadrilateral is isotropic, in the sense of
Definition~\ref{dfn:quad}. The dimension of the affine space spanned by 
$(ABCD)$ is called the dimension of the quadrilateral.
The
dimension of the quadrilateral is exactly the codimension of the space of
solutions $P$, such that the pyramid is isotropic. Generic
quadrilaterals are $3$-dimensional, but they may be flat or more degenerate as
well.

\begin{dfn}
	\label{dfn:optimal}
Let $G$ be the barycenter of an isotropic quadrilateral $(ABCD)$ of $\RR^{2n}$.
The  closest  apex $P$ to
$G$, such that the corresponding pyramid is isotropic is called the
\emph{optimal apex} of $(ABCD)$.
\end{dfn}

Following this idea, we refine the quadrangulations $\cQ_N(\RR^2)$ into
triangulations $\scrT_N(\RR^2)$. In this context, we use the definition of a
triangulation as a simplicial decomposition of $\RR^2$ invariant by the
lattice. To do this, we replace each facet $\face_{kl}$ of the
quadrangulation with a vertex $\zert_{kl}$ (represented by a black dot) at its barycenter and complete
with four edges and 
four facets, according  to Figure~\ref{figure:triangulation}.
  \begin{figure}[H]
  \begin{pspicture}[showgrid=false](-1,-1.5)(1,1.5)
    \psset{linecolor=blue }
    \psline (-1,1) (1,1)
    \psline (-1,0) (1,0)
    \psline (-1,-1) (1,-1)
    \psline (-1,1) (-1,-1)
    \psline (0,1) (0,-1)
    \psline (1,1) (1,-1)
    \color{red}
      \rput (-1,1){$\bullet$}
      \rput (-1,0){$\bullet$}
      \rput (-1,-1){$\bullet$}
      \rput (0,1){$\bullet$}
      \rput (0,0){$\bullet$}
      \rput (0,-1){$\bullet$}
      \rput (1,1){$\bullet$}
      \rput (1,0){$\bullet$}
      \rput (1,-1){$\bullet$}
      \color{black}
      \rput (0,1.3){$\cQ_N(\RR^2)$}
  \end{pspicture}
  \hspace{2cm}
  \begin{pspicture}[showgrid=false](-1,-1.5)(1,1.5)
    \psset{linecolor=blue }
    \psline (-1,1) (1,1)
    \psline (-1,0) (1,0)
    \psline (-1,-1) (1,-1)
    \psline (-1,1) (-1,-1)
    \psline (0,1) (0,-1)
    \psline (1,1) (1,-1)
    \psline (-1,1) (1,-1)
    \psline (0,1) (1,0)
    \psline (-1,0) (0,-1)
    \psline (-1,-1) (1,1)
    \psline (-1,0) (0,1)
    \psline (0,-1) (1,0)
    \color{red}
      \rput (-1,1){$\bullet$}
      \rput (-1,0){$\bullet$}
      \rput (-1,-1){$\bullet$}
      \rput (0,1){$\bullet$}
      \rput (0,0){$\bullet$}
      \rput (0,-1){$\bullet$}
      \rput (1,1){$\bullet$}
      \rput (1,0){$\bullet$}
      \rput (1,-1){$\bullet$}
	  \color{black}
	  \rput (-.5,.5){$\bullet$}
      \rput (.5,.5){$\bullet$}
      \rput (-.5,-.5){$\bullet$}
      \rput (.5,-.5){$\bullet$}
      \color{black}
      \rput (0,1.3){$\scrT_N(\RR^2)$}
  \end{pspicture}
  \caption{Triangular refinement of a quadrangulation}
	  \label{figure:triangulation}
  \end{figure}
Like quadrangulations, the triangulations $\scrT_N(\RR^2)$ descend to the
quotient $\Sigma$,
via the covering map $p\circ r_N:\RR^2\to \Sigma$. The quotient
triangulation on $\Sigma$ is  denoted $\scrT_N(\Sigma)$
or simply~$\scrT_N$. The space of triangular meshes $\scrM'_N$, analogous to
quadrangular meshes $\scrM_N$, is defined by
$$
\scrM'_N = \scrC^0(\scrT_N,\RR^{2n}).
$$
The samples $\tau_N\in\scrM_N$ of $\ell$ can be extended as samples
$\tau'_N\in\scrM'_N$, defined by
$$
\tau'_N(\vert)= \ell(\vert)
$$
for every vertex $\vert\in\fC_0(\scrT_N)$.
Similarly the isotropic quadrangular meshes $\rho_N$ of
Theorem~\ref{theo:jrt} can be extended in two flavors
$$\hat\rho_N,\rho'_N \in\scrM'_N.
$$
The former triangular mesh is defined by
$$
\hat\rho_N(\vert) =\rho'_N(\vert) = \rho_N(\vert)
$$
for every $\vert \in\fC_0(\cQ_N)$ (i.e. one of the red vertices in
Figure~\ref{figure:triangulation}).
If $\zert_{kl}$ is a vertex of
$\fC_0(\scrT_N)$ which does not belong to $\fC_0(\cQ_N)$ (i.e. one of the
black vertices in Figure~\ref{figure:triangulation}), we define
$\hat\rho(\zert_{kl})$ as the barycenter of the quadrilateral associated to
the facet $\face_{kl}$ of $\rho_N$. More explicitely
$$\hat \rho_N(\zert_{kl}) = \frac 14 \sum_{\vert \in \zert_{kl}}\rho_N(\vert).
$$

The latter triangular mesh $\rho'_N$ is defined as follows:
the quadrilateral associated to the facet $\face_{kl}$ of $\rho_N$
is isotropic, by definition of $\rho_N$.
 Let $P_{kl}\in\RR^{2n}$ 
 be the optimal apex of this quadrilateral, in the sense of
Definition~\ref{dfn:optimal}. We then define 
$$
\rho'_N(\zert_{kl})=P_{kl}.
$$
\subsubsection{From triangular meshes to piecewise linear maps}
A triangular mesh  $\rho'\in\scrM'_N$ defines a 
natural piecewise linear map. Indeed an affine map along a Euclidean
triangle is defined by the values of the map at the three vertices.
In particular, the piecewise linear maps $\ell_N$ associated to $\rho_N'$
are
isotropic, by contruction. Furthermore, 
we will see that the maps $\ell_N$ provide the solutions to
Theorem~\ref{theo:main}. This motivates the following definition.
\begin{dfn}
	\label{dfn:optpl}
	The isotropic 
	piecewise linear map $\ell_N:\Sigma\to\RR^{2n}$, defined by the
	triangular mesh
	$\rho'_N:\Sigma\to\RR^{2n}$ constructed above, is called the
	$N$-th optimal piecewise linear
	isotropic approximation of $\ell$. 
\end{dfn}

\section{Proofs}

\subsection{Proof of Corollaries}

\begin{proof}[Proof of Corollary~\ref{cor:main}]
	Let $S$ be a smooth immersed (resp. embedded) isotropic surface of 
	$\RR^{2n}$ diffeomorphic to a $2$-torus $\Sigma$. 
Hence, there exists a smooth isotropic immersion (resp.
	embedding) $\ell:\Sigma\to \RR^{2n}$ such that $\ell(\Sigma)=S$. 
	By Theorem~\ref{theo:main}, given any $\epsilon>0$, there exists a piecewise
	linear isotropic map $f:\Sigma\to \RR^{2n}$, with $\|\ell- f\|_{C^1}\leq
	\epsilon$, which is 
	a topological immersion (resp. embedding).
	By definition~$S_0=f(\Sigma)$ is an
	immersed (resp. embedded) polyhedral isotropic surface,
	$\epsilon$-close to $S$ in the $C^1$-sense.
\end{proof}

\begin{proof}[Proof of Corollary~\ref{cor:hprin}]
	We start with an immersed (resp. embedded) torus $S$ of $\RR^{2n}$, where
	$n\geq 2$. By definition, there exists a smooth torus $\Sigma$ and
	a smooth immersion
	$f:\Sigma\to \RR^{2n}$, such that $f(\Sigma)= S$. By
	a result of Gromov, for every $\epsilon>0$, there exists a smooth
	isotropic immersion $\ell:\Sigma\to \RR^{2n}$, such that
	$\|f-\ell\|_{C^0}\leq \epsilon$. Furthermore, if $n\geq 3$ and $f$ is an
	embedding, we may assume that $\ell$ is an isotropic embedding.
	As in the proof of Corollary~\ref{cor:main}, we can approximate $\ell$  by a
	piecewise linear isotropic topological immersion (or embedding if
	$n\geq 3$ and $S$ is embedded) $h:\Sigma\to \RR^{2n}$, 
	such that $\|\ell-h\|_{C^1}\leq \epsilon$. We deduce that
	$\|f-h\|_{C^0}\leq 2\epsilon$ so that the isotropic immersed
	polyhedral surface $S_0=h(\Sigma)$
	is $2\epsilon$-close to $S$, in the $C^0$-sense.
	This proves the corollary.
\end{proof}

\subsection{Symplectic density estimate}
The notation $\cO(N^{-q})$, for some integer $q$, is going be used a lot in all the following
estimates. Formally, $F(N,\nu_1,\nu_2,\cdots)=\cO(N^{-q})$, where $F$ is an
application taking values in some Euclidean vector space, means that there
exist a constant $C>0$, such that 
$$
\|F(N,\nu_1,\cdots)\|\leq CN^{-q}
$$
for every integer $N\geq 0$, and every values of the parameters
$\nu_1,\cdots$ etc\dots In our case, the parameters are typically indices
$(k,l)$ and
points in $\Sigma$. 
Thus the notation $\cO(N^{-q})$ is always understood for some uniform constant
$C>0$, which depends only on the choice of the smooth map $\ell$, or, more precisely, on
its $C^k$-norm.
The $\cO(N^{-q})$ will  appear in approximations given by the Taylor
expansion of $\ell$. We use the  notation $d^q\ell_z$ for the $q$-th
differential of $\ell$ at $z\in\Sigma$. The $q$-th differential is a
$q$-multilinear form, but, for practical reasons, we shall use the notation  
$$d^q\ell_z\cdot\zeta=
d^q\ell_z(\zeta,\cdots,\zeta)$$
when the same tangent vector $\zeta$ is
repeated $q$ times.

The key for the proof of Theorem~\ref{theo:main} is a sharp estimate of the
error term, essentially the symplectic density, involved in Theorem~\ref{theo:jrt}, 
as stated in the next proposition.
\begin{prop}
	\label{prop:betterest}
	Given a smooth isotropic map $\ell:\Sigma\to\RR^{2n}$ and its
	sequence of samples $\tau_N\in\scrM_N$ as defined in
	\S\ref{sec:quadmesh}, we have
	the estimates
	$$
	\|\mu_N(\tau_N)\|_{\cC^{1}_w}=\cO(N ^{-2}). 
$$
and
	$$
	\|\mu_N(\tau_N)\|_{\cC^{0,\alpha}_w}=\cO(N ^{-2}). 
$$
\end{prop}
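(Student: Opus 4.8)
The plan is to compute $\mu_N(\tau_N)(\face_{kl}) = \omega(\scrU_{\tau_N}(\face_{kl}),\scrV_{\tau_N}(\face_{kl}))$ by Taylor-expanding $\ell$ at the center of the facet $\face_{kl}$, and to exploit the isotropy hypothesis $\ell^*\omega=0$ to kill the leading terms. Work on the cover $\RR^2$ with the almost-isometric coordinates coming from $r_N$, so that up to uniformly commensurate metrics we may pretend $\tau_N(\vert)=\tilde\ell(\vert)$ with $\tilde\ell$ smooth and $\cO(N^{-1})$-spaced sample points. Fix a facet and let $z_0$ be its barycenter. The two diagonals are $\scrU_{\tau_N}(\face_{kl})=\frac{N}{\sqrt2}\big(\tilde\ell(z_0+\tfrac{1}{2N}w_u)-\tilde\ell(z_0-\tfrac{1}{2N}w_u)\big)$ and similarly for $\scrV$ in the orthogonal diagonal direction $w_v$, where $w_u,w_v$ are the (unit, after rescaling) diagonal vectors. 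Taylor expansion gives $\scrU_{\tau_N}(\face_{kl}) = d\tilde\ell_{z_0}\cdot \tfrac{w_u}{\sqrt2} + \tfrac{1}{24N^2}\,d^3\tilde\ell_{z_0}\cdot\big(\tfrac{w_u}{\sqrt2}\big)\cdot\text{(lower order symmetric terms)} + \cO(N^{-4})$ — crucially the \emph{even-order} terms (in particular the second differential) cancel because the finite difference is centered. So $\scrU_{\tau_N} = d\tilde\ell_{z_0}\cdot a + \cO(N^{-2})$ and $\scrV_{\tau_N} = d\tilde\ell_{z_0}\cdot b + \cO(N^{-2})$ for fixed tangent vectors $a,b$ with the $\cO(N^{-2})$ terms bounded uniformly by $\|\ell\|_{C^3}$.

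Next, I would plug these into $\omega$ and use bilinearity:
\begin{equation*}
\mu_N(\tau_N)(\face_{kl}) = \omega(d\tilde\ell_{z_0}\cdot a,\, d\tilde\ell_{z_0}\cdot b) + \omega(d\tilde\ell_{z_0}\cdot a,\, \cO(N^{-2})) + \omega(\cO(N^{-2}),\, d\tilde\ell_{z_0}\cdot b) + \cO(N^{-4}).
\end{equation*}
The first term is exactly $(\ell^*\omega)_{z_0}(a,b) = 0$ by isotropy; this is where the gain from $\cO(N^{-1})$ to $\cO(N^{-2})$ comes from. The remaining terms are $\cO(N^{-2})$ since $d\tilde\ell$ is bounded and $\omega$ is a fixed bilinear form. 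This proves the pointwise (i.e. $\cC^0_w$) bound $\|\mu_N(\tau_N)\|_{\cC^0_w}=\cO(N^{-2})$, uniformly in $(k,l)$ because all constants depend only on $\|\ell\|_{C^3}$ (or $C^4$).

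For the $\cC^1_w$ estimate I would apply the same computation to the finite differences $\frac{\partial}{\partial\vec u}$ and $\frac{\partial}{\partial\vec v}$ of $\mu_N(\tau_N)$: since these are the values of $\mu_N(\tau_N)$ on a facet minus on a neighbouring facet (times $N/\sqrt2$), and the Taylor data vary smoothly in $z_0$, one obtains that the discrete derivative of the error term is again $\cO(N^{-2})$ — concretely, differentiating the identity $(\ell^*\omega)_{z_0}(a,b)=0$ in $z_0$ still gives zero, so the same cancellation persists one order up, at the cost of one more derivative of $\ell$ (hence a $C^4$ or $C^5$ bound, which is fine since $\ell$ is smooth and $\Sigma$ compact). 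The $\cC^{0,\alpha}_w$ estimate then follows formally: a uniform $\cC^1_w$ bound of size $\cO(N^{-2})$ together with a uniform $\cC^0_w$ bound of the same size gives a $\cC^{0,\alpha}_w$ bound of size $\cO(N^{-2})$ for any $\alpha\in(0,1)$, by interpolation between adjacent cells (the Hölder seminorm over distance $\gtrsim N^{-1}$ is controlled by $N^{\alpha}$ times the $\cC^1_w$-type difference, which is $\cO(N^{-2})\cdot N^{\alpha}\cdot N^{-\alpha}$ after the appropriate rescaling in the definition of the weak Hölder norm).

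\textbf{Main obstacle.} The delicate point is bookkeeping with the weak norms and the $r_N$-coordinates: one must check that the non-isometric (but almost-isometric) maps $r_N=r+\cO(N^{-1})$ do not spoil the cancellation — i.e. that the perturbation of the lattice directions only contributes at order $\cO(N^{-2})$ and does not reintroduce a surviving $\cO(N^{-1})$ term from the second differential. Since $r_N$ differs from an isometry by $\cO(N^{-1})$, the diagonal directions $w_u,w_v$ used above are themselves only $\cO(N^{-1})$-close to the true diagonals, but the centered-difference cancellation of even-order terms is robust under this: the asymmetry it introduces is $\cO(N^{-1})\times(\text{second difference}) = \cO(N^{-1})\times\cO(N^{-2}\cdot N) = \cO(N^{-2})$ after the $N/\sqrt2$ normalization, so it is absorbed. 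Making this precise, and matching it carefully to the definition of $\|\cdot\|_{\cC^1_w}$ and $\|\cdot\|_{\cC^{0,\alpha}_w}$ from \cite[\S3.7--3.8]{JRT}, is the bulk of the work; the underlying analytic idea is just "centered differences kill even-order Taylor terms, and isotropy kills the leading surviving term."
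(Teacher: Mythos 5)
Your proposal is correct and follows essentially the same route as the paper's proof: centered Taylor expansion of $\ell$ about the facet barycenters (resp.\ the shared vertex) kills the even-order terms, the pointwise isotropy identity $\omega(\ell_u,\ell_v)=0$ kills the surviving $\cO(1)$ term and gives the $\cC^0$ bound, differentiating that identity kills the surviving $\cO(N^{-3})$ term in the first weak differences and gives the $\cC^1_w$ bound, and the $\cC^{0,\alpha}_w$ bound follows from the $\cC^1_w$ and $\cC^0$ bounds by the same interpolation-over-adjacent-cells lemma that the paper records. Your flagged ``main obstacle'' is in fact a non-issue: the computation is carried out entirely in the flat $\RR^2$ coordinates via the cover $p\circ r_N$, where the lattice is exactly $\Lambda_N$ and the pulled-back isotropy identity $(\ell\circ p\circ r_N)^*\omega=0$ holds exactly, so the only effect of $r_N=r+\cO(N^{-1})$ is that the $N$-dependent function $\ell\circ p\circ r_N$ has uniformly bounded $C^k$-norms, which is all the Taylor remainders need.
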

\begin{proof}
	The second estimate follows from the first according to the
	following lemma.
	\begin{lemma}
		The $\cC^{1}_w$-norm controls the $\cC^{0,\alpha}_w$-norm.
		In other words, there exists a constant $C>0$, such that
		for every $N>0$ and every 
		function $\phi\in\scrC^2(\cQ_N(\Sigma))$, we have
		$$
		\|\phi\|_{\cC^{0,\alpha}_w}\leq C 
		\|\phi\|_{\cC^{1}_w}.
		$$
	\end{lemma}
	\begin{proof}
This control is obtained in a similar way to the smooth case, where a
		control on first derivatives induces a control on Hölder
		regularity. We have not really given
		the definition of the weak Hölder norms in this paper and
		the reader should refer to \cite[\S 3.7]{JRT} for more
		details.
		Let $\phi$ be a function defined on  the set of facets of
		$\cQ_N(\RR^2)$. If $\face_1$ and $\face_2$ are two facets,
		their distance is 
		$d(\face_1,\face_2)=\|\zert_1-\zert_2\|$ by convention, 
		where $\zert_j$
		are the barycenters of $\face_j$. 
		If $\face_2$ is obtained by a diagonal translation of
		$\face_1$, then  $\face_2=T_u^k\circ T^l_v(\face_1)$, where $T_u$ and
		$T_v$ are the diagonal translations introduced at
		\S\ref{sec:norms}. 
		By definition, the finite difference satisfy
		$$
		|\phi(\face)-\phi(T_u\face )| \leq 
		\frac{\sqrt 2}N \left \|\frac {\del\phi}
		{\del \vec u}\right \|_{\cC^0},
		$$
		with a similar inequality for the $v$-finite difference.
		We deduce
		$$
		|\phi(\face_1) - \phi(\face_2)|
		\leq \sqrt 2 \frac{|k|+|l|}N \|\phi\|_{\cC^1_w}.
		$$
		We deduce that
		$$
		|\phi(\face_1) - \phi(\face_2)|
		\leq   2 d(\face_1,\face_2)\|\phi\|_{\cC^1_w}.
		$$
		If $0<d(\face_1,\face_2)\leq 1$, we have
		$$
		\frac{|\phi(\face_1) -
		\phi(\face_2)|}{d(\face_1,\face_2)^\alpha}
		\leq  2 \|\phi\|_{\cC^1_w}.
		$$
		If $d(\face_1,\face_2)>1$, we have

		$$
		\frac{|\phi(\face_1) -
		\phi(\face_2)|}{d(\face_1,\face_2)^\alpha}
		\leq	|\phi(\face_1) -\phi(\face_2)| 
		\leq 2\|\phi\|_{\cC^0}
		$$
		which proves the estimate of the lemma in the case of
		$\scrC^2(\cQ_N(\RR^2))$. The case of $\scrC^2(\Sigma)$
		endowed with its $\cC^1_w$ and $\cC^{0,\alpha}_w$-norms
		follows immediately, since these norms are obtained by
		passing to the cover $p\circ r_N:\RR^2\to\Sigma$. 
	\end{proof}
	We return to the proof of Proposition~\ref{prop:betterest} and we
	use the notation
	$$\eta_N=\mu_N(\tau_N)$$ 
	as a shorthand for the rest of the proof.
By definition 
	$$
	\eta_N(\face) =  \omega(\scrU_{\tau_N}(\face),\scrV_{\tau_N}(\face)).
$$
Using the index notations of \cite[\S 3.3.1 and \S 4.1.4]{JRT} 
$$
D^u_{ij}= \frac {\sqrt 2}N \scrU_{\tau_N}(\face_{ij})
\quad
D^v_{ij}= \frac {\sqrt 2}N \scrV_{\tau_N}(\face_{ij})
$$
identifying all
the functions with their lift
via $p\circ r_N$, we have
$$
 \frac 2{N^{2}} \eta_N(\face_{ij})=\omega(D^u_{ij},D^v_{ij})
$$
where, by definition
$$
D^u_{ij}= \ell (\vert_{i+1,j+1}) - \ell (\vert_{i,j})
$$
and
$$
D^v_{ij}= \ell(\vert_{i,j+1}) - \ell (\vert_{i+1,j})
$$
We can use the Taylor formula at the center $\zert_{ij}$ of the facet $\face_{ij}$
for the function~$\ell$. We are going to use the notation 
$$  \xi =
\frac 1{2N} (e_1+e_2)=\frac 12(k_1+k_2),\quad 
\zeta = \frac 1{2N}(e_2-e_1)=\frac 12(k_2-k_1), 
$$
and the $(u,v)$ coordinates introduced with Formula~ \ref{eq:uv}, obtained by rotating
the canonical basis $(e_1,e_2)$ of $\RR^2$ by an angle $+\frac\pi 4$.
In particular
$$
\xi = \frac {1}{\sqrt 2 N}\frac\del{\del u},\quad
\zeta = \frac {1}{\sqrt 2 N}\frac\del{\del v}.
$$
Then
$$
D^u_{ij} = \ell (\zert_{ij}+\xi) - \ell (\zert_{ij}-\xi) = 2
d\ell|_{\zert_{ij}}\cdot \xi +
\cO(N^{-3}) 
$$

$$
D^v_{ij} = \ell (\zert_{ij}+\zeta) - \ell (\zert_{ij}-\zeta) = 2
d\ell|_{\zert_{ij}}\cdot \zeta +
\cO(N^{-3}) 
$$
Hence
$$
\omega(D^u_{ij}, D^v_{ij})= 4\omega (d \ell_{\zert_{ij}} \cdot \xi, 
d\ell_{\zert_{ij}} \cdot \zeta)+\cO(N^{-4}).
$$
Since $\ell$ is isotropic, the first term of the RHS vanishes, hence
$$
\omega(D^u_{ij}, D^v_{ij})=\cO(N^{-4})
$$
so that
we have 
\begin{equation}\label{eq:estiC0}
	\|\eta_N\|_{\cC^0}=\cO(N^{-2}).
\end{equation}

We want to prove a similar estimate of the first order 
finite differences
$$
\frac{\del \eta_N}{\del\vec u} \quad \mbox{ and } \quad \frac
{\del\eta_N}{\del\vec v}.
$$
By definition of finite differences (cf. Formulas~\eqref{eq:fd})
\begin{equation*}
\frac{\del\eta_N}{\del\vec u} (\face_{ij}) = \frac {N^3}{\sqrt
2^3}\big (
\omega(D^u_{i+1,j+1}, D^v_{i+1,j+1})
- \omega(D^u_{i,j}, D^v_{i,j})\big )
\end{equation*}
which can be expressed as
\begin{align}
 \frac {\sqrt
2^3}{N^3}
	\frac{\del\eta_N}{\del\vec u} (\face_{ij}) =
	\omega(D^u_{i+1,j+1}-D^u_{ij},& D^v_{i+1,j+1}) \nonumber\\
	& + \omega(D^u_{i,j}, D^v_{i+1,j+1}-D^v_{i,j})
	\label{eq:taylor0}
\end{align}

The Taylor formula applied at the intersection point
$\vert=\vert_{i+1,j+1}$ between the facets $\face_{ij}$ and
$\face_{i+1,j+1}$,
gives
\begin{align}
	D^u_{ij} &=\ell (\vert)- \ell(\vert - 2\xi)  \nonumber \\
	&= d\ell_\vert \cdot 2\xi   
	-\frac 12 d^2\ell_\vert\cdot 2\xi 
	+\frac 16 d^3\ell_\vert\cdot 2\xi +\cO(N^{-4}) \nonumber \\
	&= \sqrt 2 N^ {-1} \frac{\del \ell}{\del u}(\vert)-
	N^{-2}\frac{\del^2\ell}{\del u^2}(\vert)+ 
	\frac {\sqrt 2}3 N^{-3} \frac{\del^3 \ell}{\del u^3}(\vert)
	+ \cO(N^{-4}) \label{eq:taylor1}
\end{align}
and
\begin{align}
	D^u_{i+1,j+1} &= \ell(\vert + 2\xi) -\ell(\vert) \nonumber \\
	&= d\ell_\vert \cdot 2u 
	+\frac 12 d^2\ell_\vert \cdot 2\xi
	+\frac 16 d^3\ell_\vert \cdot 2\xi +\cO(N^{-4}) \nonumber \\
	&= \sqrt 2 N^ {-1} \frac{\del \ell}{\del u}(\vert)+
	N^{-2}\frac{\del^2\ell}{\del u^2}(\vert)+ 
	\frac {\sqrt 2}3 N^{-3} \frac{\del^3 \ell}{\del u^3}(\vert)
	+ \cO(N^{-4}) \label{eq:taylor2}.
\end{align}
Hence by Equations~\eqref{eq:taylor1} and \eqref{eq:taylor2}, we have
\begin{equation}
	\label{eq:taylor3}
	D^u_{i+1,j+1}-D^u_{ij} =
	2 N^ {-2}\frac{\del^2\ell}{\del u^2}(\vert)+\cO(N^{-4})
\end{equation}
Using the notations
$$
k_1=\frac {e_1}N =N^{-1}\frac\del{\del x} \quad \mbox{ and }\quad k_2=\frac{e_2}N=
N^{-1}\frac \del{\del y},
$$
a similar computation shows that
\begin{multline*}
	D^v_{ij} = \ell(\vert - k_1)-\ell(\vert-k_2)
	 = d\ell_\vert\cdot (k_2-k_1)\\ + \frac 12 d^2\ell_\vert\cdot k_1 
	-   \frac 12 d^2\ell_\vert\cdot k_2
	 - 
	\frac 16 d^3\ell_\vert \cdot k_1 +
	\frac 16 d^3\ell_\vert\cdot k_2 + \cO(N^{-4}) 
\end{multline*}
and using the fact that $k_2+k_1=2\xi$, $k_2-k_1=2\zeta$, we have
$$
	D^v_{ij} = d\ell_\vert \cdot 2\zeta -\frac 12 d^2\ell_\vert (2\zeta,2\xi)
	-\frac 16 d^3\ell_\vert\cdot k_1 
	+\frac 16 d^3\ell_\vert \cdot k_2 +\cO(N^{-4}) 
	$$
	so that
	\begin{multline}
		D^v_{ij} = \sqrt 2 N ^{-1}\frac {\del \ell}{\del u}(\vert)
	-N^{-2}\frac{\del^2\ell}{\del u\del v}(\vert)
\\
		+\frac {N^{-3}}6 \left (\frac {\del^3\ell}{\del y^3}(\vert)
	-\frac {\del^3\ell}{\del x^3}(\vert)\right )+\cO(N^{-4})
	\label{eq:taylor4}
\end{multline}
Now
\begin{multline*}
	D^v_{i+1,j+1} = \ell(\vert + k_2)-\ell(\vert+k_1) 
	=  d\ell_\vert\cdot (k_2-k_1)+\frac 12 d^2\ell_\vert \cdot k_2 
	-\frac 12 d^2\ell_\vert\cdot k_1 \\+ \frac 16 d^3\ell_\vert \cdot k_2 
	-\frac 16 d^3\ell_\vert\cdot k_1 + \cO(N^{-4}) 
\end{multline*}
and we have
\begin{multline}
	D^v_{i+1,j+1}	= \sqrt 2N^{-1}\frac{\del \ell}{\del u}(\vert)
	+N^{-2}\frac{\del^2\ell}{\del u\del v}(\vert)
\\	+\frac {N^{-3}}6 \left (\frac {\del^3\ell}{\del y^3}(\vert)
	-\frac {\del^3\ell}{\del x^3}(\vert)\right )+\cO(N^{-4})
	\label{eq:taylor5}
\end{multline}
Then, by Equations~\eqref{eq:taylor4} and \eqref{eq:taylor5}
\begin{equation}
	\label{eq:taylor6}
	D^v_{i+1,j+1}-D^v_{ij} =
	2N ^{-2}\frac {\del^2\ell}{\del u\del v}(\vert)+\cO(N^{-4}).
\end{equation}
In conclusion, by Equations~\eqref{eq:taylor0},\eqref{eq:taylor1},
\eqref{eq:taylor3}, \eqref{eq:taylor5} and 
\eqref{eq:taylor6}
\begin{multline}\label{eq:taylor7}
	{ N^{-3}} \frac{\del\eta_N}{\del \vec u}(\face_{ij})=
N^{-3}\omega \left (\frac{\del^2\ell}{\del u^2}(\vert)
,\frac{ \del \ell}{\del v}(\vert) \right) +
\\
	N^{-3}\omega \left (
\frac{ \del \ell}{\del u}(\vert) ,
\frac{\del^2\ell}{\del u\del v}(\vert)
\right ) +\cO(N^{-5}).
\end{multline}
The lower order term of the above expansion vanishes. Indeed,
by isotropy of $\ell$, we have the equation
$$
0=\omega \left (\frac{\del \ell}{\del u},\frac{\del \ell}{\del v}\right )
$$
and differentiating in the $u$-direction gives
$$
0=\omega\left (\frac{\del^2 \ell}{\del u^2},\frac{\del \ell}{\del v}\right )+
\omega\left (\frac{\del \ell}{\del u},
\frac{\del^2 \ell}{\del u\del v}\right ).
$$
Therefore 
\begin{equation}\label{eq:estderu}
\frac{\del\eta_N}{\del \vec u}(\face_{ij})=\cO(N^{-2}).
\end{equation}
It is easy to see that the estimate $\cO(N ^{-2})$ are uniform in
$\face_{ij}$, and that
the constant involved depend only on the derivatives of $\ell$, which are
bounded since $\Sigma$ is closed.
By symmetry, we have similar estimates
in the $v$-direction 
\begin{equation}
	\label{eq:estider}
\frac{\del\eta_N}{\del \vec v}(\face_{ij})=\cO(N^{-2}),
\end{equation}
	so that by Equations~\eqref{eq:estiC0}, \eqref{eq:estderu} and~\eqref{eq:estider} 
$$
\|\eta_N\|_{\cC^1_w} = \cO(N^{-2})
$$
which proves the proposition.
\end{proof}
\begin{cor}
	\label{cor:quadest}
	The isotropic quadrangular meshes $\rho_N$ of Theorem~\ref{theo:jrt}
	satisfy 
	$$
	\|\rho_N-\tau_N\|_{\cC^0}=\cO(N^{-2}).
	$$
\end{cor}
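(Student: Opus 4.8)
The plan is to obtain Corollary~\ref{cor:quadest} simply by chaining together the two results that have just been established, with no new computation required. Concretely, I would first invoke Theorem~\ref{theo:jrt}: for the smooth isotropic immersion $\ell:\Sigma\to\RR^{2n}$ and its samples $\tau_N\in\scrM_N$, the isotropic quadrangular meshes $\rho_N$ produced there satisfy
$$
\|\rho_N-\tau_N\|_{\cC^0}=\cO\!\left(\|\mu_N(\tau_N)\|_{\cC^{0,\alpha}_w}\right).
$$
Then I would feed in the second estimate of Proposition~\ref{prop:betterest}, namely $\|\mu_N(\tau_N)\|_{\cC^{0,\alpha}_w}=\cO(N^{-2})$, and conclude $\|\rho_N-\tau_N\|_{\cC^0}=\cO(N^{-2})$.

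The only point worth spelling out is that the constants are uniform in the sense fixed in \S\ref{sec:jaub}: the $\cO$ in Theorem~\ref{theo:jrt} depends only on $\ell$ (through a finite number of its $C^k$-norms and the geometry of the cover $p\circ r_N$), and likewise for the $\cO(N^{-2})$ in Proposition~\ref{prop:betterest}; composing two such bounds again yields a bound of the same form, valid for all $N$. So the displayed inequality follows, with a constant depending only on $\ell$.

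There is no real obstacle here: all the genuine work — the sharp Taylor-expansion argument exploiting the isotropy of $\ell$ to kill the leading-order term of both $\mu_N(\tau_N)$ and its $u$- and $v$-finite differences — has already been carried out in the proof of Proposition~\ref{prop:betterest}, and the passage from $\cC^1_w$ to $\cC^{0,\alpha}_w$ was handled by the lemma comparing the weak norms. The corollary is thus a formal consequence. I would write the proof in two lines: "By Theorem~\ref{theo:jrt} and the second estimate of Proposition~\ref{prop:betterest}, $\|\rho_N-\tau_N\|_{\cC^0}=\cO(\|\mu_N(\tau_N)\|_{\cC^{0,\alpha}_w})=\cO(N^{-2})$, which proves the corollary."
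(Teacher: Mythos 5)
Your proposal is correct and coincides with the paper's own (one-line) proof: both simply chain Theorem~\ref{theo:jrt} with the second estimate of Proposition~\ref{prop:betterest}. Your extra remark about uniformity of the implicit constants is a reasonable clarification but is not a departure from the paper's argument.
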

\begin{proof}
	This is an immediate consequence of Theorem~\ref{theo:jrt} and
	Proposition~\ref{prop:betterest}.
\end{proof}

\subsection{Triangular mesh estimates}
The goal of this section is to expand the estimate of
Corollary~\ref{cor:quadest} into an estimate for triangular
meshes as follows:
\begin{prop}
	\label{prop:triest}
	The optimal isotropic triangular mesh $\rho'_N$ and the triangular
	samples $\tau'_N$ of $\ell$ satisfy the estimate
$$
	\|\rho'_N-\tau'_N\|_{\cC^0}=\cO(N^{-2}).
$$
\end{prop}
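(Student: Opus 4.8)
The plan is to compare the two triangular meshes vertex by vertex. On a vertex $\vert\in\fC_0(\cQ_N)$ coming from the quadrangulation (a red vertex of Figure~\ref{figure:triangulation}) we have $\rho'_N(\vert)=\rho_N(\vert)$ and $\tau'_N(\vert)=\ell(\vert)=\tau_N(\vert)$, so Corollary~\ref{cor:quadest} already gives $\|\rho'_N(\vert)-\tau'_N(\vert)\|\le\|\rho_N-\tau_N\|_{\cC^0}=\cO(N^{-2})$. Hence everything reduces to a uniform estimate $\|\rho'_N(\zert_{kl})-\tau'_N(\zert_{kl})\|=\|P_{kl}-\ell(\zert_{kl})\|=\cO(N^{-2})$ at the barycentres $\zert_{kl}$ of the facets, where $P_{kl}$ is the optimal apex of the quadrilateral of $\rho_N$ along $\face_{kl}$ (Definition~\ref{dfn:optimal}). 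Writing $G_{kl}=\frac14\sum_{\vert\in\zert_{kl}}\rho_N(\vert)$ for the barycentre of that quadrilateral, a Taylor expansion of $\ell$ at $\zert_{kl}$ (whose first order terms cancel by the symmetry of the four corners of $\face_{kl}$ about the centre) combined with Corollary~\ref{cor:quadest} gives $\|G_{kl}-\ell(\zert_{kl})\|=\cO(N^{-2})$; so by the triangle inequality it suffices to prove the uniform bound $\|P_{kl}-G_{kl}\|=\cO(N^{-2})$.

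Fix $(k,l)$, write $A_0,A_1,A_2,A_3$ for the vertices of the quadrilateral of $\rho_N$ along $\face_{kl}$ as in Definition~\ref{dfn:quad}, put $e_j=A_{j+1}-A_j$ (indices mod $4$) and $F_j(P)=\omega(A_j-P,A_{j+1}-P)$. Expanding the symplectic form shows the quadratic term cancels, so $F_j(P)=\omega(e_j,P)+\omega(A_j,A_{j+1})$ is \emph{affine} in $P$; the set $\mathcal A$ of admissible apices is therefore the affine subspace $\{P:F_0(P)=\cdots=F_3(P)=0\}$, which is nonempty precisely because the quadrilateral of $\rho_N$ is isotropic. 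The optimal apex is the orthogonal projection of $G_{kl}$ onto $\mathcal A$, so $\|P_{kl}-G_{kl}\|=\operatorname{dist}(G_{kl},\mathcal A)$; a short computation (the displacement lies in $J$ applied to the linear span $\Pi$ of $e_0,\dots,e_3$) identifies this distance with $\|z^\star\|$, where $z^\star\in\Pi$ is the unique vector with $\langle e_j,z^\star\rangle=F_j(G_{kl})$ for all $j$. The problem thus becomes: estimate the solution of a small, badly conditioned, $4$-equation linear system.

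For this I would use two inputs. First, a Taylor expansion at $\zert_{kl}$ together with Corollary~\ref{cor:quadest} and the isotropy $\ell^\ast\omega=0$, which annihilates the would‑be leading contributions and yields $\|e_j\|=\cO(N^{-1})$ and $|F_j(G_{kl})|=\cO(N^{-3})$. Secondly — the delicate point — the sharper cancellation $|F_0(G_{kl})+F_2(G_{kl})|=|F_1(G_{kl})+F_3(G_{kl})|=\cO(N^{-4})$, which in addition uses the isotropy of the \emph{discrete} quadrilateral of $\rho_N$, i.e. $\omega(A_2-A_0,A_3-A_1)=0$: this both forces a linear relation among the pairings $\omega(e_i,e_j)$ and, expanded to the next order, imposes a constraint on $\rho_N-\tau_N$ along $\face_{kl}$ which is exactly what makes the $\cO(N^{-3})$ terms of $F_0(G_{kl})+F_2(G_{kl})$ disappear. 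Since $\ell$ is conformal for the flat metric of $\Sigma\simeq\EE/\Gamma$, the vectors $e_j$ are, modulo $\cO(N^{-2})$, equal to $\pm N^{-1}$ times a fixed orthogonal pair of equal length spanning the tangent plane of $\ell$; hence $\Pi$ splits into a two–dimensional well‑conditioned part (on which the data $F_j(G_{kl})=\cO(N^{-3})$ forces the corresponding component of $z^\star$ to be $\cO(N^{-2})$) and an at‑most one–dimensional ill‑conditioned part transverse to the tangent plane (on which the relevant data is precisely the combination controlled by the $\cO(N^{-4})$ bound, again forcing $\cO(N^{-2})$). Adding the two contributions gives $\|z^\star\|=\cO(N^{-2})$ with a constant depending only on the $C^k$–norms of $\ell$, hence the required uniform estimate, and with the two reductions this finishes the proof.

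The hard part is the ill‑conditioned direction: the naive pseudo‑inverse bound $\operatorname{dist}(G_{kl},\mathcal A)\le\|L^\dagger\|\,\|(F_j(G_{kl}))_j\|$ loses a factor of $N$ and only gives $\cO(N^{-1})$, which would be useless for Theorem~\ref{theo:main}. The proof therefore really rests on the second, sharper cancellation $F_0(G_{kl})+F_2(G_{kl})=\cO(N^{-4})$, whose verification has to combine the isotropy of the smooth map $\ell$ with the isotropy of the discrete quadrilateral $\rho_N$ produced by Theorem~\ref{theo:jrt}.
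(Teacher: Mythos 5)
Your overall reduction is the same as the paper's: you compare both $\rho'_N$ and $\tau'_N$ to the barycentric mesh $\hat\rho_N$ of \S\ref{sec:quadtri} (your $G_{kl}$ is exactly $\hat\rho_N(\zert_{kl})$), and your estimate $\|G_{kl}-\ell(\zert_{kl})\|=\cO(N^{-2})$ is the paper's Lemma~\ref{lemma:triest1}. Where the two proofs diverge is the bound $\|P_{kl}-G_{kl}\|=\cO(N^{-2})$ (the paper's Proposition~\ref{lemma:triest2}). The paper rescales the quadrilateral by $N$, shows it is $\cO(N^{-1})$-close to the isotropic tangent parallelogram (Lemma~\ref{lemma:as}), and then plugs the resulting controls on the coefficients $\alpha_j,\beta_j$ into the \emph{explicit} particular apex of \cite[Formulas (7.15)--(7.16)]{JRT}, which immediately gives a point $Q\in\mathcal A$ with $\|G_{kl}-Q\|=\cO(N^{-2})$; the optimal apex is then at most this far. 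You instead try to estimate $\operatorname{dist}(G_{kl},\mathcal A)$ intrinsically, via a singular value analysis of the affine system. That is a genuinely different route, and in principle more self-contained since it avoids the JRT apex formulas.

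There is, however, a real gap in your argument. To turn your SVD decomposition into the bound $\|z^\star\|=\cO(N^{-2})$ you need not only the upper bound on the data in the ill-conditioned direction (your $F_0+F_2=\cO(N^{-4})$), but also a matching \emph{lower} bound on the third singular value, $\sigma_3\geq c\,N^{-2}$, and you assert the latter implicitly without justification. But $\sigma_3$ is governed by the transverse component of $e_0+e_2=A_1+A_3-A_0-A_2$, which for the sampled quadrilateral is a second-order Taylor quantity; it can be arbitrarily smaller than $N^{-2}$ (for instance wherever the relevant Hessian component of $\ell$ vanishes, or where the quadrilateral happens to be closer to planar). In that degenerate regime the minimal-norm solution does not continuously match a pseudo-inverse bound unless the data degenerates at the same rate, and nothing in your argument establishes this. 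The paper's route through a particular solution avoids this difficulty entirely, because $\operatorname{dist}(G_{kl},\mathcal A)\le\|G_{kl}-Q\|$ holds without any conditioning assumption. One further remark: the cancellation $F_0+F_2=\cO(N^{-4})$ is indeed correct, but its source is more elementary than your description suggests. Writing each $A_j=B_j+\delta_j$ with $B_j$ the tangent isotropic parallelogram and $\delta_j=\cO(N^{-2})$, and expanding the discrete isotropy $\omega(A_2-A_0,A_3-A_1)=0$ using $\omega(\overrightarrow{B_0B_1},\overrightarrow{B_0B_3})=0$ (isotropy of $\ell$), the linear-in-$\delta$ terms are exactly $2\bigl(F_0(G_{kl})+F_2(G_{kl})\bigr)$ up to $\cO(N^{-4})$; so the cancellation is an immediate consequence of $\ell^*\omega=0$ together with $\mu_N(\rho_N)=0$, not of any further ``constraint on $\rho_N-\tau_N$.''
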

\begin{lemma}
	\label{lemma:triest1}
Let $\hat \rho_N$ and $\tau'_N$ be the  triangular meshes
	constructed from $\rho_N$ and $\tau_N$. Then
	$$
	\|\hat \rho_N - \tau'_N\|_{\cC^0}=\cO(N^{-2}).
	$$
\end{lemma}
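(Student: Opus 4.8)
The plan is to compare $\hat\rho_N$ and $\tau'_N$ vertex by vertex. On the vertices of $\fC_0(\cQ_N)$ (the red vertices), $\hat\rho_N$ agrees with $\rho_N$ and $\tau'_N$ agrees with $\tau_N$, so there the estimate $\|\hat\rho_N-\tau'_N\|_{\cC^0}=\cO(N^{-2})$ on these vertices is exactly Corollary~\ref{cor:quadest}. The only new content concerns the black vertices $\zert_{kl}$, where $\hat\rho_N(\zert_{kl})$ is the barycenter $\frac14\sum_{\vert\in\zert_{kl}}\rho_N(\vert)$ of the quadrilateral of $\rho_N$ along $\face_{kl}$, while $\tau'_N(\zert_{kl})=\ell(\zert_{kl})$. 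So the task reduces to estimating, uniformly in $(k,l)$,
$$
\Big\|\tfrac14\textstyle\sum_{\vert\in\zert_{kl}}\rho_N(\vert)-\ell(\zert_{kl})\Big\|.
$$

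First I would insert $\tau_N$: by the triangle inequality this is at most $\frac14\sum_{\vert\in\zert_{kl}}\|\rho_N(\vert)-\tau_N(\vert)\|+\big\|\frac14\sum_{\vert\in\zert_{kl}}\tau_N(\vert)-\ell(\zert_{kl})\big\|$. The first sum is $\cO(N^{-2})$ by Corollary~\ref{cor:quadest}, uniformly. For the second term, recall $\tau_N(\vert)=\ell(\vert)$ for vertices $\vert$ of $\cQ_N$, so this is $\big\|\frac14\sum_{\vert\in\zert_{kl}}\ell(\vert)-\ell(\zert_{kl})\big\|$, the difference between the average of $\ell$ over the four corners of the square facet $\face_{kl}$ and the value of $\ell$ at its center. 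A second-order Taylor expansion of $\ell$ at $\zert_{kl}$ shows the first-order terms cancel by symmetry of the four corners about the center (they come in antipodal pairs $\zert_{kl}\pm\xi\pm\zeta$ in the notation of Proposition~\ref{prop:betterest}), leaving $\frac14\sum_{\vert}\ell(\vert)-\ell(\zert_{kl})=\frac12 d^2\ell_{\zert_{kl}}\cdot(\text{half-diagonal})+\cO(N^{-3})$, which is $\cO(N^{-2})$ since the half-diagonals have length $\cO(N^{-1})$ and $d^2\ell$ is bounded on the closed surface $\Sigma$. Hence this term is also $\cO(N^{-2})$, uniformly in $(k,l)$.

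Combining the two cases, $\|\hat\rho_N(\vert)-\tau'_N(\vert)\|=\cO(N^{-2})$ for every vertex $\vert$ of $\scrT_N$, with constants depending only on $\|\rho_N-\tau_N\|_{\cC^0}$ and the $C^2$-norm of $\ell$; taking the supremum over vertices gives $\|\hat\rho_N-\tau'_N\|_{\cC^0}=\cO(N^{-2})$, which is the lemma. I do not expect any serious obstacle here: the only point requiring a little care is keeping the Taylor estimate uniform over all facets, but this is immediate since $\Sigma$ is compact so $\ell$ has bounded derivatives, and the number of corners of each facet is fixed (four). The genuinely substantive input—that the quadrangular meshes $\rho_N$ stay $\cO(N^{-2})$-close to the samples—has already been established in Corollary~\ref{cor:quadest}.
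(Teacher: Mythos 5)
Your proof is correct and takes essentially the same approach as the paper's: the paper also reduces to the black vertices, where it applies Corollary~\ref{cor:quadest} to replace $\rho_N(\vert)$ by $\ell(\vert)$ up to $\cO(N^{-2})$ and a Taylor expansion at the barycenter $\zert_{ij}$ (with the first-order terms cancelling by symmetry, which the paper leaves implicit) to get $\frac14\sum_{\vert\sim\zert_{ij}}\ell(\vert)=\ell(\zert_{ij})+\cO(N^{-2})$. Your version is slightly more explicit in spelling out the red-vertex case and the cancellation of the linear terms, but the argument is the same.
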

\begin{proof}[Proof of Lemma~\ref{lemma:triest1}]
	Let $\face_{ij}$ be a facet of $\cQ_N$ and 
	$\zert_{ij}$ its barycenter.
	By definition $\hat\rho_N(\zert_{ij})$ is the barycenter in
	$\RR^{2n}$ of the points $\rho_N(\vert)$ where $\vert$ is  a
	vertex of $\face_{ij}$.
Using the Taylor formula we have
$$
	\sum_{\vert\sim\zert_{ij}} \ell(\vert) = 4\ell(\zert_{ij}) 
	+ \cO(N^{-2}),
$$
	where $\zert_{ij}\sim \vert$ means that $\vert$ is a vertex of
	$\face_{ij}$.
	Since $\ell(\vert)=\tau_N(\vert)=\rho_N(\vert)+\cO(N^{-2})$ by
	Corollary~\ref{cor:quadest}, we deduce that
	$$
	\hat\rho_N(\zert_{ij})=\ell(\zert_{ij})+\cO(N ^{-2})
	$$
	and since $\tau'_N( \zert_{ij})=\ell(\zert_{ij})$, by definition,
	this proves the proposition.
\end{proof}

\begin{prop}
	\label{lemma:triest2}
$$
	\|\rho'_N-\hat \rho_N\|_{\cC^0}=\cO(N ^{-2})
$$
\end{prop}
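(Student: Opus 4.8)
The plan is to prove the equivalent statement: for every facet $\face_{kl}$ of $\cQ_N$, the optimal apex $P_{kl}$ and the barycenter $G=\hat\rho_N(\zert_{kl})$ of the isotropic quadrilateral $(A_0A_1A_2A_3)$ of $\rho_N$ along $\face_{kl}$ satisfy $\|P_{kl}-G\|=\cO(N^{-2})$ uniformly in $N,k,l$; on the vertices of $\cQ_N$ the meshes $\rho'_N$ and $\hat\rho_N$ coincide by definition, so this is enough. Setting $P_{kl}=G+Q$ and $A_j=G+a_j$ (hence $\sum_j a_j=0$), the isotropy of the four triangles of the pyramid, $\omega(A_i-P_{kl},A_{i+1}-P_{kl})=0$, becomes the linear system $\omega(v_i,Q)=r_i$, $i\in\ZZ/4$, with $v_i=a_{i+1}-a_i$ and $r_i=-\omega(a_i,a_{i+1})$. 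It is consistent precisely because $(A_0A_1A_2A_3)$ is isotropic (the compatibility condition recalled before Definition~\ref{dfn:optimal}), and $P_{kl}$ is, by definition, the solution nearest to $G$, i.e.\ $Q$ is the minimal-norm solution. By a standard Lagrange-multiplier computation (using $\omega=g(J\cdot,\cdot)$ for the standard $J$) one gets the closed formula $\|P_{kl}-G\|^2=r^{\!\top}\mathbf G^{+}r$, where $\mathbf G$ is the Gram matrix $\mathbf G_{ij}=g(v_i,v_j)$ and $\mathbf G^{+}$ its Moore--Penrose inverse.

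Next I would estimate the data. By Corollary~\ref{cor:quadest}, $\rho_N=\tau_N+\cO(N^{-2})$, so a Taylor expansion of $\ell$ at $\zert_{kl}$ gives $a_j=d\ell_{\zert}\!\cdot\delta_j+\cO(N^{-2})$, where the $\delta_j=\vert_j-\zert_{kl}=\cO(N^{-1})$ are the four half-diagonals of the square facet, satisfying $\delta_0=-\delta_2$, $\delta_1=-\delta_3$, $\sum_j\delta_j=0$. Since $\ell$ is isotropic, $\omega$ vanishes on $\Im\, d\ell_{\zert}$; hence $v_i=\cO(N^{-1})$, the leading term of $\omega(a_i,a_{i+1})$ cancels, and $r_i=\cO(N^{-3})$. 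Moreover $\sum_j a_j=0$ yields $r_0-r_1+r_2-r_3=-\omega(a_0+a_2,\,a_1+a_3)=0$, which, combined with the isotropy relation $\omega(a_2-a_0,\,a_3-a_1)=0$, forces the exact identities $r_0=-r_2$, $r_1=-r_3$: the vector $r$ lies in the two-dimensional subspace $\{(x_0,x_1,-x_0,-x_1)\}\subset\RR^4$. Finally, writing $\hat v_i=Nv_i$ and $P_1=d\ell_{\zert}e_1$, $P_2=d\ell_{\zert}e_2$, one has $\hat v_0=P_1+\cO(N^{-1})$, $\hat v_1=P_2+\cO(N^{-1})$, $\hat v_2=-P_1+\cO(N^{-1})$, $\hat v_3=-P_2+\cO(N^{-1})$, with $P_1,P_2$ uniformly linearly independent since $\ell$ is an immersion and $\Sigma$ is compact (as in Lemma~\ref{lemma:fpp}).

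Feeding this into $r^{\!\top}\mathbf G^{+}r$ is where the real work lies, and it is the expected main obstacle. One has $\mathbf G=N^{-2}\widehat{\mathbf G}$ with $\widehat{\mathbf G}$ the Gram matrix of the $\hat v_i$; by the previous step $\widehat{\mathbf G}$ annihilates $(1,1,1,1)$ exactly, has two eigenvalues bounded below uniformly in $N$, and a single remaining eigenvalue $\lambda_3=\cO(N^{-1})$ (possibly very small, even $0$) with eigenvector $\psi_3=\tfrac12(1,-1,1,-1)+\cO(N^{-1})$ — the degeneracy reflects that the $\rho_N$-quadrilateral is only $\cO(N^{-2})$ thick transversally to $\Im\, d\ell_{\zert}$, an infinitesimal trace of the shear symmetry. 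Expanding $r^{\!\top}\mathbf G^{+}r$ over the eigenbasis, the two well-conditioned modes contribute $\cO(\|r\|^2N^2)=\cO(N^{-4})$ at once, and the last term equals $(r\cdot\psi_3)^2/(N^{-2}\lambda_3)$; since $r\cdot\tfrac12(1,-1,1,-1)=0$ by $r_0-r_1+r_2-r_3=0$, we get $r\cdot\psi_3=\cO(N^{-1}\|r\|)=\cO(N^{-4})$, so the missing estimate is $r\cdot\psi_3=\cO(\sqrt{\lambda_3}\,N^{-1})$, i.e.\ one must show that whenever the quadrilateral is abnormally flat the residual in the diagonal direction is correspondingly small. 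I would establish this by carrying one further order in the Taylor expansions, using the exact isotropy $\omega(A_2-A_0,A_3-A_1)=0$ together with the improved weak estimate $\|\rho_N-\tau_N\|_{\cC^1_w}=\cO(N^{-2})$ coming from the construction in Theorem~\ref{theo:jrt} (which makes the diagonal differences $(\rho_N-\tau_N)(\vert_0)-(\rho_N-\tau_N)(\vert_2)$ and $(\rho_N-\tau_N)(\vert_1)-(\rho_N-\tau_N)(\vert_3)$ of size $\cO(N^{-3})$), and then bounding $r\cdot\psi_3$ by a pairing of $\Im\, d\ell_{\zert}$ against exactly the transverse thickness of the quadrilateral that governs $\sqrt{\lambda_3}$. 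Equivalently, one can bypass the pseudo-inverse and exhibit a solution $Q$ of the system of size $\cO(N^{-2})$ directly: solve the well-conditioned equations $\omega(v_0,Q)=r_0$, $\omega(v_1,Q)=r_1$ first, then correct in the thin direction to also meet $\omega(v_0+v_2,Q)=0$, the correction being controlled by the same comparison.
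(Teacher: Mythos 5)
Your approach is a genuinely different route from the paper's, and it contains a genuine gap which you yourself flag but do not close. The paper does not go through the Moore--Penrose pseudo-inverse at all. It constructs a reference isotropic \emph{parallelogram} $(B_0B_1B_2B_3)$ tangent to $\ell$ at $\ell(\vert_{ij})$, shows (Lemma~\ref{lemma:as}) that, after rescaling by $N$, the quadrilateral $(A_0A_1A_2A_3)$ is uniformly close to this near-square, and then plugs this into the \emph{explicit particular solution} $P$ of the isotropic-pyramid system given by \cite[Formula~(7.16)]{JRT}. The near-square condition forces the coefficients $\alpha_0,\alpha_1\to\tfrac12$ and $\beta_0,\beta_1\to 0$, so that $\overrightarrow{GP}=\cO(N^{-1})$ in rescaled coordinates, hence $\cO(N^{-2})$ after scaling back. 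Since the optimal apex is \emph{by definition} the isotropic apex nearest to $G$, it is at most as far as $P$, and the proposition follows. This sidesteps the ill-conditioning entirely: one never divides by the small eigenvalue $\lambda_3$.

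Your pseudo-inverse computation, by contrast, runs head-on into exactly that ill-conditioning, and your proposed repair is incomplete. Requiring $(r\cdot\psi_3)^2/(N^{-2}\lambda_3)=\cO(N^{-4})$ actually forces $|r\cdot\psi_3|=\cO(\sqrt{\lambda_3}\,N^{-3})$, not $\cO(\sqrt{\lambda_3}\,N^{-1})$ as you write; with your stated target the ill-conditioned mode would only contribute $\cO(1)$ to $\|Q\|^2$. More fundamentally, neither version is proven: the claim that ``the residual in the diagonal direction is correspondingly small when the quadrilateral is abnormally flat'' is the heart of the matter, and the sketch (one more order of Taylor, plus a $\cC^1_w$ bound on $\rho_N-\tau_N$) is not carried out. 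Note also that Theorem~\ref{theo:jrt} as stated only provides a $\cC^0$ control on $\rho_N-\tau_N$, so the $\cC^1_w=\cO(N^{-2})$ bound you invoke would itself need to be extracted from the fixed-point construction. Your closing remark — bypass the pseudo-inverse and exhibit a solution of size $\cO(N^{-2})$ directly, first solving the two well-conditioned equations and then correcting in the thin direction — is essentially what the paper does via the explicit formula from \cite[\S 7.1]{JRT}, and is the cleaner route; had you carried that out, the conditioning analysis and the unproved residual estimate would have been unnecessary.
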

\begin{proof}[Proof of Proposition~\ref{lemma:triest2}]
The optimal isotropic triangular mesh $\rho'_N$ is obtained by choosing the
	closest point to $\hat \rho_N(\zert_{ij})$, such that the pyramid
	constructed from the quadrilateral associated to the facet $\face_{ij}$ of the quadrangular mesh
	$\rho_N$ is isotropic. 
	We merely need to prove that
	$$
	\hat\rho_N(\zert_{ij}) - \rho'_N(\zert_{ij})=\cO(N^{-2}),
	$$
	since $\rho_N'$ and $\hat\rho_N$ agree along vertices of $\cQ_N$.

	Recall that $\vert_{ij}$ is a vertex of $\face_{ij}$. We consider
	the \emph{parallelogram}  $(B_0B_1B_2B_3)$ of $\RR^{2n}$ given by
	$B_0=\ell(\vert_{ij} ) = \tau'_N(\vert_{ij})$ and
	$$
	\overrightarrow{B_0B_1} = \frac{\del\ell}{\del x}(\vert_{ij}),
	\quad \overrightarrow{B_0B_3} =  \frac{\del\ell}{\del
	y}(\vert_{ij}).
	$$
	Thus, $(B_0B_1B_2B_3)$ is an isotropic parallelogram tangent to the
	map $\ell$ at $\ell(\vert_{ij})$.
	\begin{lemma}\label{lemma:as}
		All the parallelograms $(B_0B_1B_2B_3)$ obtained by the above
	construction	are uniformly close to be squares,
		in the following sense:
there exists  constants $c_1, c_2>0$, independent of $N$ and the choice of
		facet
	$\face_{ij}$, such that the side lengths of the parallelogram
	$(B_0B_1B_2B_3)$, belongs to the interval $(c_1,c_2)$ and
		$$g(\overrightarrow{B_iB_{i+1}},
		\overrightarrow{B_{i+1}B_{i+2}}) = \cO(N^{-1}),$$
		where the index $i$ is understood modulo $4$.
	\end{lemma}
	\begin{proof}[Proof of Lemma~\ref{lemma:as}]
		The fact that $\ell$ is a smooth immersion defined on a compact
		surface implies the existence of the positive constants
		$c_1,c_2>0$.
		The covering maps  $p\circ r_N:\RR^2\to \Sigma$ are
		\emph{almost} conformal in the sence of
		Equation~\eqref{eq:ac}, which implies the almost orthogonality of
		consecutive sides up to an error term of order~$\cO(N^{-1})$.
	\end{proof}
	We return to the proof of Proposition~\ref{lemma:triest2}.
	We  consider the quadrilateral $(C_0C_1C_2C_3)$ defined by
	the facet $\face_{ij}$ and $\tau_N$ rescaled by a factor~$N$:
	we put $C_0=B_0 =\tau_N(\vert_{ij})$ and the other points $C_i$ 
	are given by the vertices of
	the mesh $\tau_N$ around the facet $\face_{ij}$, rescaled by a
	homothety of center $C_0$ and
	scaling factor~$N$.

	Similarly, we construct a quadrilateral $(A_0A_1A_2A_3)$ associated
	to the isotropic quadrangular mesh $\rho_N$ and the facet $\face_{ij}$,
	rescaled by the same homothety.

	It follows from Corollary~\ref{cor:quadest}
	that the quadrilaterals $(A_0A_1A_2A_3)$ and $(C_0C_1C_2C_3)$ agree
	up to a perturbation of order $\cO(N^{-1})$ (the loss of one order is due to
	the rescaling).
	By the Taylor formula $(B_0B_1B_2B_3)$ agrees with $(C_0C_1C_2C_3)$
	up to a perturbation of order $\cO(N^{-1})$. In conclusion
	$(A_0A_1A_2A_3)$ and $(B_0B_1B_2B_3)$ agree up to a perturbation of order
	$\cO(N^{-1})$ and we deduce from Lemma~\ref{lemma:as} that the quadrilateral
		$(A_0A_1A_2A_3)$ is almost a
	square in the same sense as $(B_0B_1B_2B_3)$. 

	The optimal apex 
	of~$(B_0B_1B_2B_3)$, in the sense of Definition~\ref{dfn:optimal}, agrees with its barycenter,
	since this is  a parallelogram contained in an isotropic plane. 
	We expect that a small isotropic deformation of this
	quadrilateral like
	$(A_0A_1A_2A_3)$ ought to have an optimal apex  very close to its
	barycenter as well. 
This is indeed to case: if the isotropic quadrilateral~$(A_0A_1A_2A_3)$ is contained in a plane, then this plane must
	be isotropic and the optimal apex agrees with the barycenter of
	$(A_0A_1A_2A_3)$, by definition. 
	If~$(A_0A_1A_2A_3)$ is not contained in a plane,  
 an explicit  apex that completes the
	quadrilateral $(A_0A_1A_2A_3)$ into an isotropic pyramid can be found
	by solving the linear system of equations. This work is carried out
	in details in
	\cite[\S 7.1]{JRT}, where a 
	particular solution $P$ is given explicitly by \cite[Formula (7.16)]{JRT}.
The fact that $(A_0A_1A_2A_3)$
is almost a square in the sense of Lemma~\ref{lemma:as} implies that 
$$\alpha_0=\frac 12 +\cO(N^{-1}), \quad \alpha_1 = \frac 12 +\cO(N^{-1}),\quad \beta_0 =
\cO(N^{-1}) \quad \mbox { and } \quad \beta_1=\cO(N^{-1}).
$$
where the notations of \cite[\S 7.1.5]{JRT} have been used in the above
identities.

Then by \cite[Formula (7.15)]{JRT}, we have
$\xi(V)=(|\beta_0|+|\beta_1|)\cO(N^{-1})$, and we deduce from
\cite[Formula (7.16)]{JRT} that the particular solution $P$  satisfies
$$
\overrightarrow{GP} = \cO(N^{-1})
$$
where $G$ is the barycenter of $(A_0A_1A_2A_3)$.
In conclusion, the barycenter of the isotropic quadrilateral $(A_0A_1A_2A_3)$
	agrees with the optimal apex, up to  a perturbation of order
	$\cO(N^{-1})$. After scaling back to the original picture by a factor
	$N^{-1}$, this proves the proposition.
\end{proof}

\begin{proof}[Proof of Proposition~\ref{prop:triest}]
The proposition is an immediate consequence of Lemma~\ref{lemma:triest1}
and Proposition~\ref{lemma:triest2}.
\end{proof}

\subsection{Proof of the main theorem}
\begin{prop}
	\label{prop:plest}
	The optimal piecewise linear isotropic maps
	$\ell_N:\Sigma\to\RR^{2n}$ (cf. Definition~\ref{dfn:optpl}) satisfy the estimate
	$$
	\|\ell-\ell_N\|_{C^0} = \cO(N^{-2}), \quad
	\|\ell-\ell_N\|_{C^1} = \cO(N^{-1}).
	$$
\end{prop}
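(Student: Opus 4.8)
The plan is to compare $\ell_N$ with the \emph{piecewise linear interpolant} $L_N:\Sigma\to\RR^{2n}$ of $\ell$, meaning the piecewise linear map with respect to the triangulation $\scrT_N$ determined by the triangular samples $\tau'_N$, and to use the triangle inequality
$$
\|\ell-\ell_N\|_{C^k}\leq \|\ell-L_N\|_{C^k}+\|L_N-\ell_N\|_{C^k},\qquad k=0,1,
$$
recalling that the $C^1$-norm of a piecewise smooth map is the supremum, over the facets $\face$ of $\scrT_N$, of $\|\cdot\|_{C^0(\face)}+\|d(\cdot)\|_{C^0(\face)}$; the metric $g_\Sigma$ used to measure differentials is uniformly commensurate with the flat metric carried by the cover, so it is irrelevant for the orders of magnitude. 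All the estimates below will be uniform in $\face$ because the triangulations $\scrT_N$ are obtained by the fixed refinement of Figure~\ref{figure:triangulation} applied to the quotient quadrangulations $\cQ_N$, whose facets are images of Euclidean squares of sidelength $N^{-1}$ under the \emph{almost isometric} covering maps $p\circ r_N$ (Equation~\eqref{eq:ac}); consequently every facet of $\scrT_N$ has diameter $\cO(N^{-1})$ \emph{and} uniformly bounded aspect ratio, which is the only geometric input the argument really needs.

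First I would bound $\|\ell-L_N\|_{C^k}$ by the classical interpolation estimates. On a facet $\face$ of $\scrT_N$, $L_N|_\face$ is the affine map agreeing with $\ell$ at the vertices of $\face$; a Taylor expansion of $\ell$ at a point of $\face$, together with the boundedness of $d^2\ell$ on the compact surface $\Sigma$, gives $\|\ell-L_N\|_{C^0(\face)}=\cO(N^{-2})$, and, comparing the edge differences of $\ell$ with the constant differential $dL_N$ and using the bounded aspect ratio of $\face$, $\|d\ell-dL_N\|_{C^0(\face)}=\cO(N^{-1})$. Taking the supremum over the facets yields $\|\ell-L_N\|_{C^0}=\cO(N^{-2})$ and $\|\ell-L_N\|_{C^1}=\cO(N^{-1})$.

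Then I would bound $\|L_N-\ell_N\|_{C^k}$. Both $L_N$ and $\ell_N$ are piecewise linear with respect to $\scrT_N$, associated to the triangular meshes $\tau'_N$ and $\rho'_N$, which by Proposition~\ref{prop:triest} satisfy $\|\rho'_N-\tau'_N\|_{\cC^0}=\cO(N^{-2})$. On a facet $\face$ the difference $(L_N-\ell_N)|_\face$ is the affine map whose values at the three vertices have norm $\cO(N^{-2})$; since an affine function on a simplex attains its extreme values at the vertices, $\|L_N-\ell_N\|_{C^0(\face)}=\cO(N^{-2})$, and an elementary inverse (Markov-type) inequality on a triangle of diameter $\cO(N^{-1})$ and bounded aspect ratio gives for its constant differential $\|d(L_N-\ell_N)\|_{C^0(\face)}=\cO(N^{-2})/\cO(N^{-1})=\cO(N^{-1})$. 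Supremizing over facets and combining with the previous paragraph through the triangle inequality proves the proposition.

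The main obstacle — indeed the only non-routine point — is to ensure that the gradient interpolation estimate and the inverse inequality above are uniform, i.e. that the triangles of $\scrT_N$ do not degenerate. This is guaranteed by the near-conformality of $p\circ r_N$: every facet of $\scrT_N$ is an $\cO(N^{-1})$-perturbation of the image, under a fixed similarity of ratio $N^{-1}$, of one of the finitely many model triangles of Figure~\ref{figure:triangulation}, so its angles stay bounded away from $0$ and $\pi$, and all the implied constants depend only on the $C^2$-norm of $\ell$ and on $\Sigma$.
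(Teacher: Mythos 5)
Your proposal is correct and follows essentially the same route as the paper: the paper also introduces the piecewise linear interpolant of $\ell$ (calling it $f_N$ rather than $L_N$), bounds $\|\ell-f_N\|_{C^0}=\cO(N^{-2})$ and $\|d\ell-df_N\|_{C^0}=\cO(N^{-1})$ by the Taylor formula on facets, then bounds $\|f_N-\ell_N\|_{C^0}=\cO(N^{-2})$ from Proposition~\ref{prop:triest} and $\|df_N-d\ell_N\|_{C^0}=\cO(N^{-1})$ by a rescaling of the facet — which is precisely your Markov-type inverse inequality. Your additional remarks on the uniform aspect ratio of the triangles of $\scrT_N$ make explicit a point the paper leaves implicit, but the argument is the same.
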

\begin{proof}
	The piecewise linear isotropic map~$\ell_N$ is defined as an affine map on each facet of the
	triangulation~$\scrT_N$, which agrees with the isotropic triangular
	mesh~$\rho'_N$ at the vertices (cf. Definition~\ref{dfn:optpl}).

	Another piecewise linear map $f_N:\Sigma \to \RR^{2n}$ can be
	associated to the mesh~$\tau'_N$ exactly in the same way. In other
	words, $f_N$ is an affine map on each facet of the triangulation~$\scrT_N$, which agrees with
	the map $\ell$ at the vertices of the triangulation. This type of
	approximation is classical and by the Taylor formula applied on each
	facet of the triangulation, we have the
	following lemma:
	\begin{lemma} 
		\label{lemma:approx1}
		The sequence of piecewise linear maps $f_N$
		approximates  the
		smooth map $\ell$, in the sense that 
		$$\|f_N-\ell\|_{C^0}=\cO(N^{-2}).$$
Furthermore, $f_N$ piecewise smooth, with $\scrT_N$ as an adapted
		triangulation  and
		we have 
		$$\|df_N-d\ell \|_{C^0}=\cO(N^{-1}),$$ where 
		the $C^0$-norm of differentials of piecewise smooth maps is defined in the sense of
		Formula~\eqref{eq:c0ps}.
	\end{lemma}
The restriction of~$f_N$ and~$\ell_N$ along a facet of the
	triangulation is an  affine map. The values of the maps at the vertices are
	given respectively by~$\tau'_N$ and~$\rho'_N$. By
	Proposition~\ref{prop:triest}, these
	control values agree up to an error of order~$\cO(N^{-2})$. Since
	the  facets have size~$N^{-1}$, 
	the estimate extends globally on the entire facet
	and we obtain the control
	$$
	\|\ell_N-f_N\|_{C^0}= \cO(N^{-2}).
	$$
	Together with~Lemma~\ref{lemma:approx1}, we deduce the estimate
	\begin{equation}
		\label{eq:fs}
			\|\ell_N-\ell\|_{C^0}= \cO(N^{-2}),
	\end{equation}
	which proves the first statement of the proposition.
	
	Rescaling the source and target spaces by a factor $N$ for the restriction
	of maps $u_N=f_N-\ell_N$  on a facet of the triangulation, shows a
	sequence of affine maps, whose values 
	are of order $\cO(N^{-1})$ at the vertices. We deduce that the 
	 linear part of the maps
	$u_N$ along the facet must be of the same order. It follows that
	$$\|df_N-d\ell_N\|_{C^0}=\|du_N\|_{C^0}=\cO(N^{-1})$$
	where the $C^0$-norm of differential of  piecewise smooth
	maps is taken in the sense of Formula~\eqref{eq:c0ps}.
	Together with~Lemma~\ref{lemma:approx1}, we deduce the estimate
	\begin{equation}
		\label{eq:ss}
	\|d\ell_N-d\ell\|_{C^0}= \cO(N^{-1}).
	\end{equation}
	Formula~\eqref{eq:fs} and Formula~\eqref{eq:ss} give the control
	$$
	\|\ell_N-\ell\|_{C^1}=\cO(N^{-1})
	$$
	which proves the second statement of the proposition.
\end{proof}
\begin{cor}
	\label{cor:plest}
	For every sufficiently large $N$, the piecewise linear isotropic
	map $\ell_N:\Sigma\to\RR^{2n}$ is a topological immersion (resp. embedding)
	if $\ell$ is an smooth immersion (resp. embedding).
\end{cor}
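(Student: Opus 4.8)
The plan is to combine the $C^1$-estimate of Proposition~\ref{prop:plest} with the stability statement of Corollary~\ref{cor:approx}. First I would invoke Corollary~\ref{cor:approx}: since $\Sigma$ is a closed surface (a $2$-torus) and $\ell:\Sigma\to\RR^{2n}$ is a smooth immersion (resp. embedding), there is a threshold $\epsilon>0$ such that every piecewise linear map $h:\Sigma\to\RR^{2n}$ with $\|\ell-h\|_{C^1}\leq\epsilon$ is automatically a topological immersion (resp. embedding). This $\epsilon$ depends only on $\ell$, not on any discretization parameter.

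Next I would record that the maps $\ell_N$ are piecewise linear: by Definition~\ref{dfn:optpl} they are the affine interpolation of the triangular mesh $\rho'_N$ with respect to the triangulation $\scrT_N$, hence legitimate inputs for Corollary~\ref{cor:approx}. By Proposition~\ref{prop:plest} we have $\|\ell-\ell_N\|_{C^1}=\cO(N^{-1})$, so there is an integer $N_0$ with $\|\ell-\ell_N\|_{C^1}\leq\epsilon$ for every $N\geq N_0$. Applying Corollary~\ref{cor:approx} to $h=\ell_N$ then yields the corollary.

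I do not expect any real obstacle here: all the analytic content has already been absorbed into the sharp density estimate of Proposition~\ref{prop:betterest}, the triangular-mesh estimate of Proposition~\ref{prop:triest}, Proposition~\ref{prop:plest}, and the piecewise smooth inverse-function argument behind Proposition~\ref{prop:approx}. The only points to verify are bookkeeping ones — that $\ell_N$ is genuinely piecewise linear and that $\Sigma$ is closed so that Corollary~\ref{cor:approx} is applicable. Finally I would observe that, since the $\ell_N$ are isotropic piecewise linear maps by construction (\S\ref{sec:quadtri}) and are $C^1$-close to $\ell$ by Proposition~\ref{prop:plest}, this corollary completes the proof of Theorem~\ref{theo:main}: for $N$ large the maps $\ell_N$ are the desired piecewise linear isotropic topological immersions (resp. embeddings) arbitrarily close to $\ell$ in the $C^1$-sense.
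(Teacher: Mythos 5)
Your proposal is correct and follows exactly the paper's own argument: apply Corollary~\ref{cor:approx} to the piecewise linear maps $\ell_N$, using the $C^1$-convergence $\|\ell-\ell_N\|_{C^1}=\cO(N^{-1})$ from Proposition~\ref{prop:plest} to get below the threshold $\epsilon$ for all sufficiently large $N$. The extra bookkeeping remarks (closedness of $\Sigma$, piecewise linearity of $\ell_N$, isotropy by construction) are consistent with and implicitly used by the paper.
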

\begin{proof}
The sequence of optimal isotropic piecewise linear maps $\ell_N$ satisfies
	$$
	\lim_{N\to\infty}\|\ell-\ell_N\|_{C^1}= 0
	$$
	by Proposition~\ref{prop:plest}. 
	If $\ell$ is a smooth immersion (resp. embedding), this forces
	$\ell_N$ to be a topological immersion (resp. embedding), for every
	sufficiently large $N$, by
	Corollary~\ref{cor:approx}, which proves the theorem.
\end{proof}
\begin{proof}[Proof of Theorem~\ref{theo:main}]
	The theorem is an immediate consequence of
	Proposition~\ref{prop:plest} and Corollary~\ref{cor:plest}.
\end{proof}
\vspace{10pt}
\bibliographystyle{abbrv}
\bibliography{polylag}

\end{document}